\pgfplotsset{compat=newest}
\DeclarePairedDelimiter\Par{(}{)}
\DeclarePairedDelimiter\BBrc{\{}{\}}
\DeclareMathOperator{\Char}{char}
\DeclareMathOperator\Div{Div}
\DeclareMathOperator\Pic{Pic}
\DeclareMathOperator\Bl{Bl}
\DeclareMathOperator\Spec{Spec}
\DeclareMathOperator\Ind{Ind}
\DeclareMathOperator\Bir{Bir}
\DeclareMathOperator\id{id}
\DeclareMathOperator\mult{mult}
\newcommand{\ba}{\mathbb{A}}
\newcommand{\bc}{\mathbb{C}}
\newcommand{\bp}{\mathbb{P}}
\newcommand{\bq}{\mathbb{Q}}
\newcommand{\br}{\mathbb{R}}
\newcommand{\bz}{\mathbb{Z}}
\newcommand{\cb}{\mathcal{B}}
\newcommand{\cc}{\mathcal{C}}
\newcommand{\co}{\mathcal{O}}
\newcommand{\fb}{\mathfrak{b}}
\newcommand{\fc}{\mathfrak{c}}
\newcommand{\ff}{\mathfrak{f}}
\newcommand{\fh}{\mathfrak{h}}
\newcommand{\fl}{\mathfrak{l}}
\newcommand{\fq}{\mathfrak{q}}
\newcommand{\tX}{\widetilde{X}}
\DeclareMathOperator{\Bb}{\Bl_{\mathcal{B}}}
\DeclareMathOperator{\Bc}{\Bl_{\mathcal{C}}}
\newcommand{\tphi}{\tilde{\varphi}}
\newcommand{\tpsi}{\tilde{\psi}}
\newcommand{\bphi}{\bar{\varphi}}
\newcommand{\bpsi}{\bar{\psi}}
\newcommand{\bchi}{\bar{\chi}}
\newcommand{\tPhi}{\tilde{\Phi}}
\newcommand{\tPsi}{\tilde{\Psi}}
\newcommand{\wtPhi}{\widetilde{\Phi}}
\newcommand{\wtPsi}{\widetilde{\Psi}}
\title[Degeneration of surface automorphisms]{The degeneration of a family of rational surface automorphisms}
\author{Qitong Jiang}
\address{Department of Mathematics, The Pennsylvania State University, University Park, PA 16802, USA}
\email{qvj5031@psu.edu}
\date{\today}
\newtheorem{thm}{Theorem}
\newtheorem{prop}[thm]{Proposition}
\newtheorem{cor}[thm]{Corollary}
\newtheorem{lemma}[thm]{Lemma}
\theoremstyle{definition}
\newtheorem{example}{Example}
\numberwithin{equation}{section}
\begin{document}

\begin{abstract}
We consider a one-dimensional family of rational surfaces with automorphisms discussed in \cite{Gizatullin1994,Blanc2008,Blanc2013}. In a degeneration of this family, the limiting map is the identity map on a special fiber. We check that the map on the total space of the family has indeterminacy in the special fiber. However, we show that after blowing-up at an indeterminate curve, there is an induced birational map on the exceptional divisor over the indeterminate curve. Moreover, we show that this map has dynamical degree $\lambda=16$.
\end{abstract}

\maketitle

\setcounter{secnumdepth}{2}
\setcounter{tocdepth}{2}

\section{Introduction}

Given an automorphism $f:X\to X$ of an algebraic variety $X$, if we blow-up $X$ at a subvariety $V$ with $f(V)=V$, then $f$ induces an automorphism $\tilde{f}\vert_E:E\to E$ of the exceptional divisor $E$. This map does not have any new dynamical properties, as it fibers over the original map $f\vert_V:V\to V$. On the other hand, if $f(V)=V'\neq V$, then the lift $\tilde{f}$ on $\Bl_V{X}$ simply contracts $E$.

In this paper, we will be concerned with the situation when $f:X\dashrightarrow X$ is a birational map and $V$ is contained in $\Ind(f)$. In this case, interesting dynamics are possible on $E\subset\Bl_V(X)$. In particular, we focus on cases related to degenerations of rational surface automorphisms. Let $T$ be an elliptic curve and $X=\bp^2\times T$. Given $t\in T$, there is an involution $\iota_t$ on $\bp^2\cong\bp^2\times\BBrc{t}$ associated with $t$ and $T$ constructed in \cite{Gizatullin1994,Blanc2008,Blanc2013}. Fix a non-trivial $2$-torsion point $q\in T$. We have the maps $\varphi,\psi,\chi:X\dashrightarrow X$ defined by
\begin{equation*}
	\varphi(p,t)=(\iota_q(p),t),\quad\psi(p,t)=(\iota_t(p),t),\quad\chi(p,t)=(\iota_{-t}(p),t),
\end{equation*}
where $-t$ is the inverse of $t$. The map $\vartheta=\varphi\circ\chi\circ\varphi\circ\psi$ can be viewed as a family of birational maps of $\bp^2$:
\begin{equation*}
	\BBrc{\iota_q\circ\iota_{-t}\circ\iota_q\circ\iota_t:t\in T}.
\end{equation*}
For a general $t\in T$, the map $\vartheta_t:X_t\dashrightarrow X_t$, the restriction of $\vartheta$ to the fiber $X_t=\bp^2\times\BBrc{t}$ over $t$, can be regularized to an automorphism on a higher birational model of $X_t$, and the dynamical degree of $\vartheta_t$ is $\lambda(\vartheta_t)=17+12\sqrt{2}$, as computed from its action on divisors (cf. Lemma 3.3 in \cite{Blanc2013}; see also \cite{Bot2021}); whereas the dynamical degree of $\vartheta_q$ is $\lambda(\vartheta_q)=1$ as $\vartheta_q=\iota_q^4=\id_{\bp^2}$. As the family degenerates to the identity on the fiber $X_q=\bp^2\times\BBrc{q}$, $\vartheta$ develops indeterminate curves inside $X_q$. In fact, the interesting dynamics do not completely disappear as the family degenerates; rather, they become concentrated along the indeterminate curves. To see this, we blow-up $X$ at the indeterminate curves and find that the induced map on the exceptional divisor has dynamical degree $\lambda=16$.

Here is the outline of the discussion:
\begin{itemize}
	\item In Section \ref{sec:3}, we exhibit families of birational maps of rational surfaces whose general member can be regularized as an automorphism. We study the indeterminate curves in the fiber over $q$ of the corresponding threefold map by factoring the map as a sequence of automorphisms and Atiyah flops.
	\item In Section \ref{sec:4}, we find induced self-maps of the exceptional divisors over the identified indeterminate curves in the special fiber over $q$.
	\item In Section \ref{sec:5}, we study the dynamical properties of the induced self-maps of the exceptional divisors.
\end{itemize}

In particular, we show the following result.
\begin{thm}
\label{mainthm}
There exists a curve $\fq\subset X_q$ which is indeterminate for the map $\vartheta=\varphi\circ\chi\circ\varphi\circ\psi$. Let $\Bl_\fq{X}$ be the blow-up of $X=\bp^2\times T$ at $\fq$ with exceptional divisor $E_\fq$. There is a birational self-map of $E_\fq$ induced by $\vartheta$, and the map has dynamical degree $\lambda=16$.
\end{thm}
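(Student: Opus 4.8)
The plan is to carry out the three-step program announced in the outline: first pin down the indeterminate curve $\fq\subset X_q$, then show that blowing it up converts the transverse behaviour of $F=\varphi\circ\chi\circ\varphi\circ\psi$ along $\fq$ into a genuine dominant self-map of the exceptional divisor, and finally compute the dynamical degree of that self-map. I would begin by analyzing the factors separately. Since $\varphi$ is constant along $T$, one has $\Ind(\varphi)=\Ind(\iota_q)\times T$, whereas $\psi$ and $\chi$ have fiberwise indeterminacy $\Ind(\iota_t)$ and $\Ind(\iota_{-t})$, sweeping out curves as $t$ ranges over $T$. The decisive feature of the $2$-torsion point is that $-q=q$, so over $t=q$ the base loci of $\psi$ and $\chi$ collide (both equal $\Ind(\iota_q)$) and $F_q=\iota_q^4=\id$. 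Thus $F$ is the identity on the fiber $X_q$ to zeroth order, and any indeterminacy of $F$ inside $X_q$ can only arise from the collision, as $t\to q$, of the moving base loci of $\psi,\chi$ against the fixed base locus of $\varphi$. To make this precise I would use the factorization of Section~2, writing $F$ as a chain of automorphisms of blown-up models interspersed with Atiyah flops: the flops occur exactly where two base-point curves meet transversally, creating ordinary double points in the graph closure, and tracking them isolates the curve $\fq\subset X_q$ that remains indeterminate. I expect $\fq$ to be rational (a line or conic through the base points of $\iota_q$).

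Next I would blow up $X$ along $\fq$. As $\fq$ is a smooth rational curve in the smooth threefold $X$, the exceptional divisor $E_\fq=\bp(N_{\fq/X})$ is a Hirzebruch surface ruled over $\fq\cong\bp^1$. The key point is to show that the lift $\widetilde F$ of $F$ to $\Bl_\fq X$ does \emph{not} contract $E_\fq$, but restricts to a dominant rational self-map $g:=\widetilde F\vert_{E_\fq}:E_\fq\dashrightarrow E_\fq$. This is precisely where the hypothesis $\fq\subset\Ind(F)$ is essential, as in the dichotomy of the Introduction: because $F\vert_{X_q}=\id$ to zeroth order, the surviving information on $E_\fq$ is the transverse first-order data of $F$ along $\fq$, encoded by its action on the normal directions together with the fiber coordinate on $T$, and this data is nontrivial. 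Concretely I would take local coordinates $(x,y,s)$ on $X$ with $X_q=\{s=0\}$ and $\fq=\{s=0,\,y=0\}$, expand $F$ to the relevant order, and read off $g$ in the blow-up charts $(x,\,y/s,\,s)$ and $(x,\,y,\,s/y)$; the expected outcome is that $g$ is the projectivization of the leading jet of $F$ transverse to $X_q$ and is visibly birational, so $g\in\Bir(E_\fq)$.

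Finally I would compute $\lambda(g)$. Since $E_\fq$ is rational, $g$ is conjugate to a plane Cremona map, and I would exploit the factorization $F=\varphi\circ\chi\circ\varphi\circ\psi$ to decompose $g$ into four induced pieces, one per involution. The expectation, to be confirmed by the local jet computation, is that each involution contributes a factor of algebraic degree $2$ to the induced dynamics on $E_\fq$, so that $g$ has degree $2^4=16$ on a suitable rational model. To conclude $\lambda(g)=16$ I would then establish that $g$ is algebraically stable on that model, i.e.\ that no curve contracted by $g$ is carried into $\Ind(g)$ under forward iteration (the Diller--Favre criterion); once this holds, $\lambda(g)=\deg g=16$ and the degree is recovered as the spectral radius of the pullback $g^*$ on $\NS$. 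The main obstacle is exactly this last step. Algebraic stability is precisely what fails for the general fiber $F_t$, where base-point cancellation produces the quadratic unit $\lambda(F_t)=17+12\sqrt2=(1+\sqrt2)^4$ (the larger root of $x^2-34x+1$), realized only after regularizing $F_t$ to an automorphism of a blown-up surface. In the degenerate limit I expect no such cancellation, so that $g$ is algebraically stable of integer degree $16$; verifying this requires careful control of the orbits of the indeterminacy and contracted loci of $g$ through the blow-up model, which I anticipate to be the technical heart of the argument.
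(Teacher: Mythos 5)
Your first two steps track the paper closely: the curve $\fq$ is indeed the conic through the five base points of $\iota_q$ (times $\{q\}$), identified via the flop factorization, and the induced map on the exceptional divisor is extracted by exactly the kind of jet/parametrized-curve computation you describe. The gap is in your final step, and it is not merely technical. First, the four involutions do not induce four self-maps of $E_\fq$: since $\iota_q$ contracts $\fq$ to $q$, each of $\varphi,\psi,\chi$ induces a map \emph{between} $E_q$ (the exceptional $\bp^2$ over the point $(q,q)$) and $E_\fq$, and the induced dynamics alternates between these two surfaces. The paper groups the four pieces into two self-maps $\Phi=\bpsi^{-1}\circ\bphi$ and $\Psi=\bchi^{-1}\circ\bphi$ of $E_q\cong\bp^2$; these are \emph{quintic} Cremona maps (not degree $2$ per involution), and each has $\lambda=1$ with quadratic degree growth, so no multiplicative ``contribution per involution'' is available.

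Second, and decisively, your expected identity $\lambda(g)=\deg g=16$ fails on any $\bp^2$ model. The conjugated composition $\Psi\circ\Phi$ has degree $21$ on $E_q\cong\bp^2$: using the paper's matrices, the $h$-coefficient of $(\wtPsi\circ\wtPhi)_*(h)$ is $5\cdot 5-4=21$. Since $\lambda=16<21$, the map $\Psi\circ\Phi$ is \emph{not} algebraically stable on $\bp^2$ (otherwise $\lambda$ would equal $21$), so the cancellation you hoped would be absent in the degenerate limit does occur. The correct route, which the paper takes, is to blow up $E_q$ at $15$ points ($7$ ordinary plus $8$ infinitely near), prove algebraic stability of $\wtPsi\circ\wtPhi$ on that model --- which requires both the functoriality statement $(\wtPsi\circ\wtPhi)_*=\wtPsi_*\circ\wtPhi_*$ (checking that curves contracted by $\wtPhi$ avoid $\Ind(\wtPsi)$) and checking that every contracted curve is sent to a fixed point --- and then read off $\lambda=16$ as the spectral radius of the product of two $16\times 16$ matrices acting on the rank-$16$ group $N^1$ of that model. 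So $16$ arises as a spectral radius strictly smaller than the algebraic degree, and the numerology $16=2^4$ has no counterpart in the actual computation; a proof following your plan as stated would stall at the point where the degree comes out as $21$ and stability on $\bp^2$ fails.
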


As a warm-up, let us consider several examples of birational self-maps of $\bp^3$ where the induced maps on exceptional divisors can be easily obtained. In particular, we blow-up $\bp^3$ at the line $L$ defined by $y=z=0$ to get $\pi:\Bl_L\bp^3\to\bp^3$. Then we consider the lifts of several birational maps $f:\bp^3\dashrightarrow\bp^3$, i.e., $\tilde{f}=\pi^{-1}\circ f\circ\pi$, and the induced maps $\tilde{f}\vert_E$ on the exceptional divisor $E$ over $L$. The exceptional divisor $E$ is isomorphic to $\bp^1\times\bp^1$ on $\Bl_{L}\bp^3$.

\begin{example}
\label{example:dilation}
Given $b\in\bc$, let $f:\bp^3\to\bp^3$ be the automorphism defined by the formula
\begin{equation*}
	[w:x:y:z]\mapsto[bw:x:y:z].
\end{equation*}
Then $f(L)=L$. The induced map $\tilde{f}\vert_E:E\to E$ can be written as
\begin{equation*}
	([u:v],[s:t])\mapsto([bu:v],[s:t]).
\end{equation*}
In the affine chart $\ba^1\times\ba^1$ where $v,t\neq 0$, this is a dilation in the $u$-diration:
\begin{equation*}
	(u,s)\mapsto(bu,s).
\end{equation*}
\end{example}
\begin{example}
\label{example:bending}
For a slightly more complicated example, let $f:\bp^3\dashrightarrow\bp^3$ be the birational map given by
\begin{equation*}
	[w:x:y:z]\mapsto[x^2w:x^3:x^2y:x^2y-ayw^2+x^2z],
\end{equation*}
for some $a\in\bc$. Then $f$ is defined on a dense open subset of the line $L$. The induced map $\tilde{f}\vert_E:E\dashrightarrow E$ is birational in this case:
\begin{equation*}
	([u:v],[s:t])\mapsto([u:v],[v^2t-au^2t+v^2s:v^2t]).
\end{equation*}
We note that in the affine chart where $v,t\neq 0$, this is an ``area-preserving'' bend:
\begin{equation*}
	(u,s)\mapsto(u,1-au^2+s).
\end{equation*}
\end{example}

\begin{example}
In general, given a birational map $f:X\dashrightarrow X$, if $V$ is a subvariety of codimension $\geq 2$ and $f(V)\neq V$, then the lifted map on $\Bl_V{X}$ simply contracts $E$. Let $f:\bp^3\to\bp^3$ be the automorphism
\begin{equation*}
	[w:x:y:z]\mapsto[y:z:w:x].
\end{equation*}
Then the image of the line $L$ under $f$ is not $L$. The lifted map $\tilde{f}:\Bl_L\bp^3\dashrightarrow\Bl_L\bp^3$ will contract the exceptional surface $E$ to the strict transform of the line $\BBrc{w=x=0}\subset\bp^3$.
\end{example}

\begin{example}
\label{example:reflection}
Suppose that $V$ is a subvariety of codimension $\geq 2$ that lies in $\Ind(f)$, and we blow-up $\bp^3$ at $V$. We consider when there is an induced birational map on the exceptional divisor. For example, let $f:\bp^3\dashrightarrow\bp^3$ be the composition of the automorphism:
\begin{equation*}
	[w:x:y:z]\mapsto[y:z:w:x]
\end{equation*}
and the standard involution of $\bp^3$:
\begin{equation*}
	[w:x:y:z]\mapsto[xyz:wyz:wxz:wxy].
\end{equation*}
The line $L=\BBrc{y=z=0}$ is contained in the indeterminacy locus of $f$. In this case, $f$ induces an automorphism $\tilde{f}\vert_E:E\to E$, whose formula can be written as
\begin{equation*}
	([u:v],[s:t])\mapsto([s:t],[u:v]).
\end{equation*}
We note that, in the affine chart where $v,t\neq 0$, this map is the reflection across the line $u=s$:
\begin{equation*}
	(u,s)\mapsto(s,u).
\end{equation*}
\end{example}

\begin{example}
\label{example:henon}
We can actually obtain a Hénon map on the exceptional divisor $E$ by composing the maps introduced in Examples \ref{example:dilation}, \ref{example:bending} and \ref{example:reflection}. The composition $f$ is undefined along the line $L=\BBrc{y=z=0}$, but the induced map on the exceptional divisor $E$ over $L$ is a Hénon map.
\end{example}

For our main discussion, we recall some preliminaries.

\section{Groundwork}

We assume that the base field $K$ is algebraically closed of characteristic $\Char{K}=0$.

\subsection{Relative numerical equivalence}

Let $X$ be a smooth variety over $K$, and let $p:X\to S$ be a proper morphism onto a variety $S$. We define $Z_1(X/S)$ to be the free abelian group generated by reduced irreducible curves which are mapped to points by $p$. There is a numerical equivalence $\equiv_S$ in $\Pic(X)$ defined by
\begin{equation*}
	D\equiv_S D'\quad\text{if and only if}\quad D\cdot C=D'\cdot C\text{ for all }C\in Z_1(X/S).
\end{equation*}
We write $N^1(X/S):=(\Pic(X)/\equiv_S)\otimes_\bz\br$. When $S=\Spec{K}$, we drop $/S$ from the notation.

\subsection{Blow-up and exceptional locus}

The blow-up of a variety $X$ at a subvariety $V$ will be written as $\Bl_VX$. Let $\pi:\Bl_VX\to X$ be the projection map. The exceptional divisor $\pi^{-1}(V)$ is isomorphic to $\bp(N_{V/X})$, where $N_{V/X}$ is the normal bundle to $V$ in $X$. We denote $\pi^{-1}(V)$ by $E_V$. We can thus identify the points on $E_V$ with elements in $\bp(N_{V/X})$.

We will frequently consider sequences of blow-ups throughout the paper. To simplify notations, we use the same notation for a subvariety $V$ and all of its strict transforms.

\subsection{Dynamical degree and algebraic stability}

Let $X$ be a smooth projective variety of dimension $k$, and let  $f:X\dashrightarrow X$ be a birational map. There exists a smooth projective variety $\tX$ and proper morphisms $\pi:\tX\to X$ and $\sigma:\tX\to X$ such that the diagram commutes:
\begin{equation*}
\begin{tikzcd}
	& \widetilde{X} \arrow[dl,"\pi"'] \arrow[dr,"\sigma"] & \\
	X \arrow[rr,dashed,"f"] & & X.
\end{tikzcd}
\end{equation*}
If $D\in\Div(X)$ is a divisor, then we define its \textit{\textbf{pullback}} (under $f$) by $f^*(D):=\pi_*\sigma^*(D)$ and its \textit{\textbf{pushforward}} by $f_*(D):=\sigma_*\pi^*(D)$. Now let $H\in\Div(X)$ be an ample divisor. We define the \textit{\textbf{($H$-)degree}} of $f$ to be
\begin{equation*}
	\deg_H(f):=H^{k-1}\cdot f^*(H).
\end{equation*}
The \textit{\textbf{(first) dynamical degree}} of $f:X\dashrightarrow X$ is defined to be
\begin{equation*}
	\lambda(f):=\lim_{n\to\infty}\Par*{\deg_H(f^n)}^{1/n}.
\end{equation*}
The fact that the above limit exists and is independent of the choices of $H$ is proved in \cite{Dang2020,Truong2020}.

We focus on the case when $X$ is a smooth projective surface. In this case, we have that $(f^{-1})_*=f^*$ and $(f^{-1})^*=f_*$ and that
\begin{equation}
\label{eqn:degreeff-1}
	\deg_H(f)=H\cdot f^*(H)=f_*(H)\cdot H=\deg_H(f^{-1}).
\end{equation}
So $\lambda(f)$ can also be defined using pushforward and is the same as $\lambda(f^{-1})$. Let us also consider the induced linear maps
\begin{equation*}
	f^*:N^1(X)\to N^1(X)\quad\text{and}\quad f_*:N^1(X)\to N^1(X).
\end{equation*}
The birtional map $f:X\dashrightarrow X$ is said to be \textit{\textbf{algebraically stable}} if
\begin{equation*}
	(f^{n})^*=(f^*)^n\quad(\text{or equivalently }(f^n)_*=(f_*)^n)
\end{equation*}
for any integer $n\geq 0$. The following theorem is useful in practice.

\begin{thm}[Proposition 1.13 and Theorem 1.14 in \cite{DF2001}]
\label{thm:AS}
Let $S$ be a smooth projective surface, and let $f,g:S\dashrightarrow S$ be birational maps. Then
\begin{equation*}
	(g\circ f)_*=g_*f_*
\end{equation*}
if and only if there is no irreducible curve $C\subset S$ such that $f(C)\subset\Ind(g)$. Moreover, $f:S\dashrightarrow S$ is algebraically stable if and only if there is no irreducible curve $C\subset S$ and $n\geq 0$ such that $f(C)\subset\Ind(f^n)$.
\end{thm}

We use the following theorem to compute the dynamical degree for an algebraically stable birational self-map of a surface.

\begin{thm}[Corollary 1.19 in \cite{DF2001}]
\label{thm:radius}
Let $S$ be a smooth projective surface, and let $f:S\dashrightarrow S$ be an algebraically stable birational map. Then $\lambda(f)$ coincides with the spectral radius of $f_*:N^1(S)\to N^1(S)$.
\end{thm}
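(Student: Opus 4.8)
The plan is to translate the analytic limit defining $\lambda(f)$ into a purely linear-algebraic spectral quantity, with algebraic stability serving as the bridge. Since $\dim S = 2$ and $H$ is big and nef, the degree is $\deg_H(f^n) = H\cdot (f^n)^*H$. Algebraic stability gives $(f^n)^* = (f^*)^n$, so $\deg_H(f^n) = H\cdot (f^*)^n H$ and therefore $\lambda(f) = \lim_{n\to\infty}\bigl(H\cdot (f^*)^n H\bigr)^{1/n}$. The goal is to show this limit equals the spectral radius $\rho$ of $f^*$ acting on $N^1(S)$; since $f_*$ and $f^*$ are adjoint with respect to the nondegenerate intersection pairing, i.e. $f^*\alpha\cdot\beta = \alpha\cdot f_*\beta$, the two operators are transpose to one another and share the same spectrum, so $\rho = \rho(f_*)$ and the theorem follows. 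It is worth stressing that algebraic stability is exactly what makes this work: in general one has only $(f^n)^*\leq (f^*)^n$ in the pseudoeffective order, and the degrees could grow strictly more slowly than $\rho^n$; algebraic stability removes this defect.

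For the upper bound, fix any norm $\|\cdot\|$ on the finite-dimensional space $N^1(S)$. The intersection pairing is a bounded bilinear form, so $H\cdot(f^*)^nH \leq C\,\|(f^*)^n\|$ for a constant $C$ depending only on $H$. By Gelfand's formula $\|(f^*)^n\|^{1/n}\to\rho$, and hence $\limsup_{n}\bigl(\deg_H(f^n)\bigr)^{1/n}\leq\rho$.

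The substance of the argument is the matching lower bound, which is where positivity enters. Since the dynamical degree is independent of the chosen big and nef class, I may assume $H$ is ample. The pushforward $f_*$ preserves the pseudoeffective cone of $N^1(S)$ (the class of an effective divisor pushes forward to an effective class), and this cone is closed, pointed, and full-dimensional; by the Perron--Frobenius theorem for cone-preserving operators, $\rho = \rho(f_*)$ is an eigenvalue of $f_*$ admitting a nonzero eigenvector $\theta$ inside the cone, $f_*\theta = \rho\,\theta$. Using adjointness,
\[
 \theta\cdot (f^*)^n H \;=\; (f_*)^n\theta\cdot H \;=\; \rho^{\,n}\,(\theta\cdot H),
\]
and $\theta\cdot H>0$ because $H$ is ample and $\theta$ is a nonzero pseudoeffective class. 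On the other hand $(f^*)^nH$ is again pseudoeffective, and for $\kappa\gg 0$ the class $\kappa H-\theta$ is ample by Kleiman's criterion, so it pairs nonnegatively against $(f^*)^nH$. This yields
\[
 \kappa\,\bigl(H\cdot (f^*)^nH\bigr)\;\geq\;\theta\cdot (f^*)^nH\;=\;\rho^{\,n}\,(\theta\cdot H),
\]
so $\deg_H(f^n)\geq (\theta\cdot H)\,\kappa^{-1}\rho^{\,n}$ and $\liminf_n\bigl(\deg_H(f^n)\bigr)^{1/n}\geq\rho$. Combining the two bounds gives $\lambda(f)=\rho=\rho(f_*)$.

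I expect the cone/positivity step to be the main obstacle: one must guarantee that the ample class $H$ has a genuine, non-cancelling component along the leading Perron direction, so that possible Jordan blocks or subdominant eigendirections of $f^*$ do not spoil the $\rho^{\,n}$ growth. The Perron--Frobenius theorem for the pseudoeffective cone together with the ampleness of $H$ is precisely what supplies this, and verifying that $f_*$ (equivalently $f^*$) preserves that cone and that the cone is proper are the facts requiring care.
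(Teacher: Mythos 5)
Your argument is sound, but note that the paper itself contains no proof of this statement: it is imported wholesale as Corollary 1.19 of \cite{Diller2001DynamicsOB}, so what you have done is reconstruct the Diller--Favre argument, and your reconstruction matches the standard proof in its essentials. Both halves check out: algebraic stability converts $\deg_H(f^n)$ into $H\cdot(f^*)^nH$; Gelfand's formula gives $\limsup_n(\deg_H(f^n))^{1/n}\le\rho(f^*)$; and for the lower bound, the Perron--Frobenius theorem for cone-preserving operators (Birkhoff--Vandergraft) applies because the pseudoeffective cone is closed, pointed, and full-dimensional and is preserved by $f_*$ (it is the closure of the effective classes, and pushforward of effective is effective), producing $\theta$ with $f_*\theta=\rho\theta$. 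The adjointness $f^*\alpha\cdot\beta=\alpha\cdot f_*\beta$, which follows from the projection formula on a resolution $S\xleftarrow{\pi}\Gamma\xrightarrow{\sigma}S$ of $f$, does double duty exactly as you use it: it identifies $\rho(f_*)=\rho(f^*)$ and yields $\theta\cdot(f^*)^nH=\rho^n(\theta\cdot H)$; the strict positivity $\theta\cdot H>0$ is correct (via the Hodge index theorem, a nonzero pseudoeffective class cannot be orthogonal to an ample class), and dominating $\theta$ by $\kappa H$ closes the estimate. One caveat deserves flagging: your lower bound leans throughout on projectivity of $S$ (ampleness of $H$, openness of the ample cone, nef--pseudoeffective nonnegativity, pointedness of the cone), whereas the statement as transcribed says quasi-projective. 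This is harmless for the paper's purposes, since the theorem is only ever applied to iterated blow-ups of $E_q\cong\bp^2$, which are projective; but for a genuinely open surface one would have to pass to a smooth projective compactification and verify that algebraic stability survives that passage, a step neither your proof nor the paper's citation addresses.
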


\subsection{Atiyah flop}

Let $Y$ be a smooth quasi-projective threefold. A curve $\fb\subset Y$ is called a \textit{\textbf{$(-1,-1)$-curve}} if $\fb\cong\bp^1$ and its normal bundles satisfies
\begin{equation*}
	N_{\fb/Y}\cong\co_{\bp^1}(-1)\oplus\co_{\bp^1}(-1).
\end{equation*}
There is a diagram
\begin{equation*}
\begin{tikzcd}
	& E\subset Z\quad\quad \arrow[dl,"\pi"'] \arrow[dr,"\pi^+"] & \\
	\fb\subset Y \arrow[rr,dashed,"\phi"] & & Y^+\supset\fb^+,
\end{tikzcd}
\end{equation*}
where $\pi$ is the blow-up of $Y$ at $\fb$, $\pi^+$ is the blow-up of $Y^+$ at $\fb^+$, and
$E\cong\bp^1\times\bp^1$ is the common exceptional divisor for $\pi$ and $\pi^+$ with the two rulings corresponding to the two contractions (see \cite{Reid1983}). The birational map $\phi$ obtained in this way is called an \textit{\textbf{Atiyah flop}}. We note that the map $\phi$ is an isomorphism outside the curve $\fb$. The $(-1,-1)$-curve $\fb$ is also called the \textit{\textbf{flopping curve}} of $\phi$, and the curve $\fb^+$ is called the \textit{\textbf{flopped curve}}. (In this paper, we do not consider flops of curves with normal bundle $\co\oplus\co(-2)$.)

Let $X$ be a quasi-projective threefold, and let $B$ and $C$ be two curves on $X$ intersecting only at a point $p\in X$ with distinct tangent directions. Let
\begin{align*}
	\tX:&=\Bl_C\Bl_BX\overset{\pi_2}{\longrightarrow}\Bl_BX\overset{\pi_1}{\longrightarrow}X\\
	\tX^+:&=\Bl_B\Bl_CX\overset{\pi_2^+}{\longrightarrow}\Bl_CX\overset{\pi_1^+}{\longrightarrow}X
\end{align*}
be the blow-ups of $X$ at $B$ and $C$ in different orders. Lastly, we let $b$ be the preimage of the point $p$ on $\Bl_B{X}$, which is a curve isomorphic to $\bp^1$; and we let $\fb$ be the strict transform of the curve on $\tX$.

\begin{prop}
\label{prop:atiyahflop}
The normal bundle $N_{\fb/\tX}$ is isomorphic to $\co_{\bp^1}(-1)\oplus\co_{\bp^1}(-1)$, and the natural birational map $\phi:\tX\dashrightarrow \tX^+$ is an Atiyah flop with the flopping curve $\fb$.
\end{prop}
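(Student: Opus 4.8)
The plan is to reduce everything to an explicit local model near $p$, compute the normal bundle directly from the blow-up structure, and then recognize the resulting modification as the standard Atiyah flop. First I would localize: away from $p$ the curves $B$ and $C$ are disjoint, so the two blow-up towers agree and $\phi$ is an isomorphism there, and all of the content is concentrated over $p$. Since $B$ and $C$ meet at $p$ with distinct tangent directions, I can choose formal (or \'etale) local coordinates $(x,y,z)$ centered at $p$ with $B=\BBrc{y=z=0}$ and $C=\BBrc{x=z=0}$. In this model $\tX$, $\tX^+$ and $\fb$ are all explicit, so it suffices to carry out the computation here.

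Next I would compute $N_{\fb/\tX}$. Write $E_B$ for the exceptional divisor of $\pi_1\colon\Bl_BX\to X$ and $F\subset E_B$ for the fiber over $p$. The normal bundle sequence of $F\subset E_B\subset\Bl_BX$ is $0\to N_{F/E_B}\to N_{F/\Bl_BX}\to N_{E_B/\Bl_BX}\vert_F\to 0$. Here $F$ is a fiber of the $\bp^1$-bundle $E_B\to B$, so $N_{F/E_B}\cong\co_{\bp^1}$, while $N_{E_B/\Bl_BX}\vert_F=\co_{E_B}(E_B)\vert_F\cong\co_{\bp^1}(-1)$, whence $N_{F/\Bl_BX}\cong\co_{\bp^1}\oplus\co_{\bp^1}(-1)$. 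The strict transform of $C$ meets $F$ transversally in the single point $\rho$, and blowing it up twists precisely the $N_{F/E_B}\cong\co_{\bp^1}$ summand by $\co(-\rho)$, giving $N_{\fb/\tX}\cong\co_{\bp^1}(-1)\oplus\co_{\bp^1}(-1)$. I would confirm both the twist and which summand it hits by writing down the two charts that cover $\fb$ in the local model and checking that each transition function is multiplication by the affine coordinate on $\fb$, which is exactly the clutching datum of $\co_{\bp^1}(-1)$.

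Finally I would identify $\phi$ with the Atiyah flop. Once $N_{\fb/\tX}\cong\co_{\bp^1}(-1)^{\oplus 2}$ is known, the blow-up $W:=\Bl_\fb\tX$ has exceptional divisor $\bp(N_{\fb/\tX})\cong\bp^1\times\bp^1$ with normal bundle $\co(-1,-1)$, carrying two rulings; one ruling contracts back to $\tX$, and the problem is to show the other contracts to $\tX^+$. Exploiting the symmetry under $B\leftrightarrow C$, the identical analysis produces a $(-1,-1)$-curve $\fb^+\subset\tX^+$, namely the strict transform of the fiber of $E_C\to C$ over $p$, and I would construct an isomorphism $W\cong\Bl_{\fb^+}\tX^+$ over $X$ exchanging the two rulings of the common exceptional $\bp^1\times\bp^1$. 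This exhibits $\phi$ as the roof $\tX\leftarrow W\to\tX^+$ in which the two maps are the two ruling-contractions, i.e.\ as the flop of $\fb$.

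I expect the last step to be the main obstacle: matching the two blow-up towers on the nose and verifying the swap of rulings is a finite but delicate chart-by-chart comparison in the local model, and one must check that the two small contractions of $W$ genuinely recover $\tX$ and $\tX^+$ rather than merely birational models of them. By contrast, the localization and the normal bundle computation are routine once the coordinates are fixed.
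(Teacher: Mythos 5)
The paper never proves this proposition: it is stated in the Groundwork section as a known fact (this configuration is the classical local model for the Atiyah flop), and the neighboring statements (Propositions \ref{prop:formula} and \ref{prop:linkblowups}) are likewise given without proof. So your argument can only be judged on its own terms, and on those terms it is correct. The localization is legitimate (it quietly uses that $B$, $C$, and $X$ are smooth near $p$, which is implicit in ``distinct tangent directions'' and holds in the paper's application), and the normal-bundle computation is right, including the one point where a real mistake is possible and which you rightly insist on checking: which summand of $N_{F/\Bl_BX}\cong\co_{\bp^1}\oplus\co_{\bp^1}(-1)$ is twisted. Under the identification $\fb\cong F$, the sheaf $N_{\fb/\tX}$ sits inside $N_{F/\Bl_BX}$ as the sections whose value at $\rho$ lies on the image of the tangent line to the strict transform of $C$; because that strict transform meets $E_B$ transversally at $\rho$ (this is exactly where the distinct-tangents hypothesis enters), this line is complementary to $N_{F/E_B}\vert_\rho$, so it is the trivial summand whose degree drops, giving $\co_{\bp^1}(-1)^{\oplus 2}$ rather than $\co_{\bp^1}\oplus\co_{\bp^1}(-2)$. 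As for the final step, which you flag as the main obstacle, it is smaller than you fear: in the model $B=\BBrc{y=z=0}$, $C=\BBrc{x=z=0}$ in $\ba^3$, the curve $\fb$ meets only two of the three standard charts of $\tX$, and in the induced charts of $W=\Bl_{\fb}\tX$ the natural birational map to $\Bl_{\fb^+}\tX^+$ is given simply by permuting coordinates, from which one reads off both that it is an isomorphism and that the two rulings of the common exceptional $\bp^1\times\bp^1$ are exchanged; since this birational map exists globally over $X$ and is an isomorphism away from the fibers over $p$, verifying it formally or \'etale locally at $p$ suffices. Note also that your construction automatically disposes of your own worry about recovering $\tX^+$ ``on the nose'': once $W\cong\Bl_{\fb^+}\tX^+$ is established compatibly with the maps to $X$, the contraction of the second ruling \emph{is} the blow-down to $\tX^+$ by construction, not merely to a birational model of it.
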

\begin{proof}
On $\widehat{X}:=\Bl_B{X}$, the curve $b$ is a fiber of the exceptional divisor $E_B$. So the self-intersection number of $b$ as a curve on the surface $E_B$ is $0$. The total transform of $E_B$ on $\tX$ is irreducible, so it is also the strict transform of $E_B$. In fact, it is isomorphic to the blow-up of $E_B\subset\widehat{X}$ at a point on the fiber $b$. Therefore, the strict transform $\fb$ of $b$, considered as a curve on the surface $E_B\subset\tX$ (the strict transform of $E_B$), has self-intersection number $-1$. If we can show that $K_{\tX}\cdot\fb=0$, by the Remark 5.2 (a) in \cite{Reid1983}, there is an exact sequence
\begin{equation*}
	0\longrightarrow\co_{\bp^1}(-1)\longrightarrow N_{\fb/\tX}\longrightarrow\co_{\bp^1}(-1)\longrightarrow 0;
\end{equation*}
and this shows that $N_{\fb/\tX}\cong\co(-1)\oplus\co(-1)$.

The canonical divisor of $\tX$ is
\begin{equation*}
	K_{\tX}=\pi_2^*\pi_1^*K_X+\pi_2^*E_B+E_C
\end{equation*}
By the push-pull formula, $\pi_2^*\pi_1^*K_X$ is trivial on $\fb$, and $\pi_2^*E_B\cdot\fb=E_B\cdot b$. Moreover, since the curve $\fb$ intersects $E_C$ transversally at a point, we have $E_C\cdot\fb=1$. To compute the intersection number $E_B\cdot b$, we note that
\begin{equation*}
	E_B\cdot b=\deg(\co_{\widehat{X}}(E_B)\vert_b)=\deg(\co_{\widehat{X}}(E_B)\otimes\co_{E_B}\vert_b)=\deg(N_{E_B/\widehat{X}}\vert_b).
\end{equation*}
Since $N_{E_B/\widehat{X}}\cong\co_{E_B}(-1)$, the restriction of $N_{E_B/\widehat{X}}$ to $b\cong\bp^1$ is isomorphic to $\co_{\bp^1}(-1)$. Thus, $E_B\cdot b=-1$. We can therefore conclude that
\begin{equation*}
	K_{\tX}\cdot\fb=0-1+1=0.
\end{equation*}
\end{proof}

In the case where a pseudo-automorphism $f$ of a quasi-projective threefold can be factored into a sequence of Atiyah flops and automorphisms, the image of a divisor class can be computed recursively. We need the following lemma.

\begin{lemma}[Lemma 8 in \cite{Lesieutre2016}]
\label{lemma:flop}
Let $\phi:X\dashrightarrow X^+$ be an Atiyah flop with the flopping curve $c\subset X$. Let $c^+$ denote the flopping curve of $\phi^{-1}$. Suppose that $D^+$ is a divisor on $X^+$ and that $D$ is the pullback of $D^+$ under $\phi$. Then
\begin{equation}
\label{eqn:formula}
    \mult_{c}(D)=\mult_{c^+}(D^+)+D^+\cdot c^+.
\end{equation}
\end{lemma}

Let $p:X\to T$ be a proper and flat family of surfaces parametrized by a curve $T$. Let $\phi:X\dashrightarrow X^+$ be an Atiyah flop such that the flopping curve $c$ lying in the fiber $F:=X_q$ for some $q\in T$ and that the self-intersection number of $c$ in $F$ is $-2$. Let $c^+$ denote the flopped curve, and let $F^+:=\phi(F)\subset X^+$. For a divisor $D^+\in N^1(X^+/T)$, we have
\begin{equation*}
    \phi^*(D^+)\vert_{F}=(\phi\vert_F)^*(D^+\vert_{F^+})+(D^+\cdot c^+)c.
\end{equation*}
Indeed, by Lemma \ref{lemma:flop}, we have
\begin{align*}
    \mult_{c}(\phi^*(D^+)\vert_{F})=\mult_{c}(D)=\mult_{c^+}(D^+)+D^+\cdot c^+=\mult_{c^+}(D^+\vert_{F^+})+D^+\cdot c^+.
\end{align*}
Since the self-intersection number of $c$ in $F$ is $-2$, by the Remark 5.13 (a) in \cite{Reid1983}, $\phi\vert_F:F\to F^+$ is an isomorphism, and so
\begin{equation*}
    \mult_{c}((\phi\vert_F)^*(D^+\vert_{F^+}))=\mult_{c^+}(D^+\vert_{F^+}).
\end{equation*}
The equation (\ref{eqn:formula}) follows.

Inductively, we obtain the following result:
\begin{prop}
\label{prop:formula}
Let $X$ be a threefold, and let $f:X\dashrightarrow X$ be a pseudo-automorphism of $X$ that preserves the fibration $p:X\to T$, with indeterminate curves lying in the fiber $F$. Suppose $f$ factors into a sequence of flops and isomorphisms
\begin{equation*}
    f:X=X_1\overset{\phi_1}{\dashrightarrow}X_1^+\overset{f_1}{\longrightarrow}X_2\overset{\phi_2}{\dashrightarrow}X_2^+\rightarrow\dotsm\dashrightarrow X_{n-1}^+\overset{f_{n-1}}{\longrightarrow}X_{n}=X,
\end{equation*}
where $\phi_i$ are Atiyah flops, and $f_i$ are the isomorphisms. Let $c_{i}$ be the flopping curve of $\phi_i$ with flopped curve $c_i^+$. We write
\begin{equation*}
	g_i:=f_{n-1}\circ\dotsm\circ f_{n-i}:X_{n-i}^+\dashrightarrow X_n, i=1,\dots,n-1;
\end{equation*}
and write
\begin{equation*}
	h_i:=f_{n-i-1}\circ\dotsm\circ\phi_1:X_{1}\dashrightarrow X_{n-i}, i=1,\dots,n-2,\quad\text{and}\quad h_{n-1}:=\id_X.
\end{equation*}
Moreover, we assume that the self-intersection number of $c_i$ is $-2$ on $h_{n-i}(F)$. For a divisor $D\in N^1(X/T)$,
\begin{equation}
\label{eqn:differenceformula}
    f^*(D)\vert_F=\Par*{f\vert_F}^*(D\vert_F)+\sum_{i=1}^{n-1}(g_i^*(D)\cdot c_{n-i}^+)h_i^*(c_{n-i}).
\end{equation}
\end{prop}

In particular, if $D$ is ample and $c_{n-i}^+\not\in\Ind(g_i)$ for any $i$, then the curve $c_{n-i}^+$ is not in the base locus of $g_i^*(D)$ and $g_i^*(D)\cdot c_{n-i}^+\geq 0$ for all $i$. Therefore, the difference
\begin{equation*}
	(f^n)^*(D)\vert_F-(f^n\vert_F)^*(D\vert_F)
\end{equation*}
is an effective divisor supported on flopping curves and their pullbacks under $f\vert_F$.

Finally, we observe that there is a natural birational map between two birational models. Let $B$ and $C$ be two curves in a threefold $X$ intersecting at a point $p$ with distinct tangent directions. There are several ways to blow-up $X$:
\begin{enumerate}
	\item The first sequence of blow-ups (at the obvious centers) is
	\begin{equation*}
	\begin{tikzcd}
		\Bl_C\Bl_B\Bl_p{X} \rar["\pi_C"] & \Bl_B\Bl_p{X} \rar["\pi_B"] & \Bl_p{X} \rar["\pi"] & X
	\end{tikzcd}
	\end{equation*}
	with exceptional divisors $E_p$, $E_B$, $E_C$ over $p,B,C$, respectively.
	\item The second sequence of blow-ups is
	\begin{equation*}
	\begin{tikzcd}
		\Bl_C\Bl_{\ff'}\Bl_B{X} \rar["\pi'_C"] & \Bl_{\ff'}\Bl_B{X} \rar["\pi'"] & \Bl_B{X} \rar["\pi'_B"] & X
	\end{tikzcd}
	\end{equation*}
	with exceptional divisors $E_B'$, $E_{\ff'}'$, $E_C'$ over $B,p,C$, respectively. The blow-up center of $\pi'$ is the fiber $\ff'$ of $E_B'\subset\Bl_BX$ over $p$.
	\item The third sequence of blow-ups is
	\begin{equation*}
	\begin{tikzcd}
		\Bl_{\ff''}\Bl_C\Bl_B{X} \rar["\pi''"] & \Bl_C\Bl_B{X} \rar["\pi''_C"] & \Bl_B{X} \rar["\pi''_B"] & X
	\end{tikzcd}
	\end{equation*}
	with exceptional divisors $E_B''$, $E_C''$, $E_{\ff''}''$ over $B,C,p$, respectively. The blow-up center of $\pi''$ is the flopping curve $\ff''$ of the Atiyah flop $\phi:\Bl_C\Bl_B{X}\dashrightarrow\Bl_B\Bl_C{X}$.
\end{enumerate}

A standard exercise (Exercise 21 in \cite{Kollár2008}) states the following result:
\begin{prop}
Let $p\in L\subset\ba^3$ be a point on a line. Let $f\subset\Bl_L\ba^3$ be the preimage of $p$. The identity map on $\ba^3$ induces an isomorphism $\Bl_f\Bl_L\ba^3\cong\Bl_L\Bl_p\ba^3$.
\end{prop}

This is locally analytically the same as considering the iterated blow-ups of a smooth threefold at a point lying on a curve. In our situation, we have the isomorphism between $\Bl_B\Bl_pX$ and $\Bl_{\ff'}\Bl_BX$, and hence an isomorphism between $\Bl_C\Bl_B\Bl_p{X}$ and $\Bl_C\Bl_{\ff'}\Bl_B{X}$.

The birational map (an Atiyah flop) between $\Bl_{C}\Bl_{\ff'}\Bl_B{X}$ and $\Bl_{\ff''}\Bl_C\Bl_B{X}$ is provided by Proposition \ref{prop:atiyahflop}. Here we blow-up $\Bl_B{X}$ at the two intersecting curves $\ff'$ and $C$ in different orders. We observe that $E_p$ on $\Bl_{C}\Bl_{B}\Bl_p{X}$ is isomorphic to the blow-up of $\bp^2$ at two points. Identifying $\Bl_C\Bl_{\ff'}\Bl_B{X}$ with $\Bl_C\Bl_B\Bl_p{X}$, the flopping curve of the Atiyah flop between $\Bl_{C}\Bl_{\ff'}\Bl_B{X}$ and $\Bl_{\ff''}\Bl_C\Bl_B{X}$ is the curve with self-intersection $-1$ on $E_p$ connecting the two exceptional curves. The restriction of the Atiyah flop to $E_p$ is a map that contracts the flopping curve (Remark 5.13 (a) in \cite{Reid1983}).

We summarize the discussion in the following proposition.

\begin{prop}
\label{prop:linkblowups}
There is an isomorphism (induced by the identity map)
\begin{equation*}
	\Bl_C\Bl_B\Bl_p{X}\overset{\cong}{\longrightarrow}\Bl_C\Bl_{\ff'}\Bl_B{X},
\end{equation*}
which identifies $E_p$ and $E_{\ff'}'$. Furthermore, there is an Atiyah flop (induced by the identity map)
\begin{equation*}
	\Bl_C\Bl_B\Bl_p{X}\cong\Bl_C\Bl_{\ff'}\Bl_B{X}\dashrightarrow\Bl_{\ff''}\Bl_C\Bl_B{X}
\end{equation*}
that sends $E_p$, which is isomorphic to $\bp^2$ blown-up at two points, to $E_{\ff''}''\cong\bp^1\times\bp^1$.
\end{prop}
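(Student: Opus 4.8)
The plan is to treat the two assertions separately, reducing the isomorphism to a local computation and the flop to an application of Proposition \ref{prop:atiyahflop} on the intermediate threefold $\Bl_B X$. For the isomorphism I would work near $p$ in coordinates $(x,y,z)$ with $p$ the origin and $B=\{y=z=0\}$. The conceptual reason for the identity is that both $\Bl_B\Bl_p X$ and $\Bl_{\ff'}\Bl_B X$ compute the blow-up of $X$ along the product ideal $\fm_p\cdot I_B$: on $\Bl_p X$ the total transform of $I_B$ equals the ideal of the strict transform $B$ (since $E_p$ is already Cartier and $B$ is smooth at $p$), while on $\Bl_B X$ the total transform of $\fm_p$ equals the ideal of the fibre $\ff'$ of $E_B'$ over $p$. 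I would confirm this in charts: the chart $y=xv,\ z=xvw$ of $\Bl_B\Bl_p X$ and the chart of $\Bl_{\ff'}\Bl_B X$ obtained by blowing up $\ff'=\{x=y=0\}$ inside $\Bl_B X$ carry the same map to $X$, and under this identification $E_p=\{x=0\}$ is matched with $E_{\ff'}'=\{x=0\}$ and $E_B$ with $E_B'$. Since both constructions are canonical over $X$ and the strict transform of $C$ corresponds on the two sides, blowing up $C$ last yields the stated isomorphism $\Bl_C\Bl_B\Bl_p X\cong\Bl_C\Bl_{\ff'}\Bl_B X$, identifying $E_p$ with $E_{\ff'}'$.

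For the flop I would set $Y=\Bl_B X$ and observe that
\begin{equation*}
	\Bl_C\Bl_{\ff'}\Bl_B X=\Bl_C\Bl_{\ff'}Y
	\qquad\text{and}\qquad
	\Bl_{\ff''}\Bl_C\Bl_B X=\Bl_{\ff''}\Bl_C Y,
\end{equation*}
so that the two models differ only in the order in which the curves $\ff'$ and $C$ (strict transforms in $Y$) are blown up: the left model blows up $\ff'$ then $C$, while the right model blows up $C$ then $\ff'$, whose strict transform is exactly the flopping curve $\ff''$ of the original flop $\Bl_C\Bl_B X\dashrightarrow\Bl_B\Bl_C X$ by Proposition \ref{prop:atiyahflop}. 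The curves $\ff'$ and $C$ meet in the single point $\ff'\cap C$ with distinct tangent directions there, since $\ff'$ is a fibre of $E_B'$ while $C$ is transverse to $E_B'$ because $T_pC\neq T_pB$. Applying Proposition \ref{prop:atiyahflop} to $Y$ with the pair $\ff'$ and $C$ then produces the Atiyah flop $\Bl_C\Bl_{\ff'}Y\dashrightarrow\Bl_{\ff'}\Bl_C Y$, induced by the identity, which is the required flop.

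It remains to track the divisor $E_p$, and this is the step I expect to be the main obstacle. The easy halves are as follows: blowing up the strict transforms of $B$ and $C$ induces blow-ups of $E_p=\bp(T_pX)=\bp^2$ at the two distinct points $[T_pB]$ and $[T_pC]$, so $E_p\cong\Bl_2\bp^2$; and on the other side $N_{\ff''}\cong\co_{\bp^1}(-1)\oplus\co_{\bp^1}(-1)$ by Proposition \ref{prop:atiyahflop}, whence $E_{\ff''}''=\bp(N_{\ff''})\cong\bp^1\times\bp^1$. The heart of the matter is to identify the flopping curve inside $E_p$ and to see that flopping it yields precisely $\bp^1\times\bp^1$. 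Here I would compute in charts that the flopping curve of the $Y$-flop (the strict transform of the fibre of $E_{\ff'}'$ over $\ff'\cap C$) coincides with the strict transform of the line $\overline{[T_pB]\,[T_pC]}\subset\bp^2$, i.e.\ with the unique $(-1)$-curve of $\Bl_2\bp^2$ meeting both exceptional curves. Since this curve has normal bundle $\co(-1)$ inside the surface $E_p$ and $\co(-1)\oplus\co(-1)$ inside the threefold, the Atiyah flop restricts on $E_p$ to the contraction of this $(-1)$-curve, and contracting it identifies $\Bl_2\bp^2=\Bl_{\mathrm{pt}}(\bp^1\times\bp^1)$ with $\bp^1\times\bp^1$, giving $E_p\mapsto E_{\ff''}''$. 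The delicate point is that the chart coordinates on the iterated blow-up are nonlinear (monomial), so one must check carefully that they really cut out the strict transform of $\overline{[T_pB]\,[T_pC]}$ and not some other curve; for this reason I would carry out the final identification explicitly in coordinates.
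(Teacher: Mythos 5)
The paper states Proposition \ref{prop:linkblowups} without proof and moves on, so there is no argument of record to compare yours against; judged on its own, your plan is correct and would constitute a complete proof once the chart checks are written out. Both reductions are sound. For the isomorphism, the product-ideal argument works: by the universal property of blow-ups (an ideal product is invertible exactly when both factors are), $\Bl_{\fm_p\cdot I_B}X$ agrees with blowing up one ideal and then the total transform of the other, and your two local facts are exactly what is needed, namely $I_B\cdot\co_{\Bl_pX}=I_{E_p}\cdot I_{\widetilde B}$ (so its blow-up equals that of the strict transform, the factor $I_{E_p}$ being invertible) and $\fm_p\cdot\co_{\Bl_BX}=I_{\ff'}$ on the nose. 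One phrasing should be fixed: the total transform of $I_B$ does not \emph{equal} the ideal of the strict transform, it differs from it by the invertible factor $I_{E_p}$, which is what your parenthetical already indicates. For the flop, applying Proposition \ref{prop:atiyahflop} to $Y=\Bl_BX$ with the pair $(\ff',C)$ is the right move, since the strict transform of $\ff'$ in $\Bl_CY$ is by definition $\ff''$, so the flop's target is $\Bl_{\ff''}\Bl_C\Bl_BX$; your transversality check ($T_pC\neq T_pB$ forces $C$ to meet $E_B'$ transversally in one point of $\ff'$) supplies the hypotheses. Your identification of the flopping curve is also right: under $E_{\ff'}'\cong E_p$, the ruling of $E_{\ff'}'\to\ff'$ is the pencil of lines through $[T_pB]$, the fibre over $\ff'\cap C$ is the line through $[T_pC]$, and its strict transform is the $(-1)$-curve $\ell-e_1-e_2$ of $\Bl_2\bp^2$. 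The only step that still needs its own justification is that the flop restricted to $E_p$ is the contraction of that curve; this follows by resolving the flop by blowing up the flopping curve $c$, noting that the strict transform of $E_p$ meets the exceptional $\bp^1\times\bp^1$ in the section determined by $N_{c/E_p}=\co_{\bp^1}(-1)\subset N_{c/\widetilde{X}}$, a curve of bidegree $(1,0)$, which the second contraction collapses. Alternatively, for the proposition as stated you only need that the strict transform of $E_p$ under the flop is $E_{\ff''}''$ (automatic, as each is the unique divisor lying over $p$) together with the two independent computations $E_p\cong\Bl_2\bp^2$ and $E_{\ff''}''=\bp(N_{\ff''})\cong\bp^1\times\bp^1$.
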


We have acquired all the necessary tools for further discussion.

\section{Family of Maps}
\label{sec:3}

We present families of rational surface birational self-maps in this section and examine their properties.

\subsection{Cubic involution of \texorpdfstring{$\bp^2$}{}} 

We recall the following construction in \cite{Gizatullin1994,Blanc2008,Blanc2013}: Let $T$ be a smooth cubic curve in $\bp^2$, and let $q\in T$ be a point. For a general point $p\in\bp^2$, the line $l_{q,p}$ connecting $q,p$ intersects $T$ at three points $q,a,b$. There is a unique involution $i_{q,p}$ of $l_{q,p}$ with $i_{q,p}(a)=a$ and $i_{q,p}(b)=b$. Therefore, given a smooth cubic curve $T$ and $q\in T$, we can define a cubic involution $\iota_q:\bp^2\dashrightarrow\bp^2$ by
\begin{equation*}
    p\mapsto\iota_q(p):=i_{q,p}(p).
\end{equation*}

We first note that the cubic involution fixes (a dense open subset of) the cubic curve pointwise by construction. Now, let $q$ be a point in $T$ such that there are four distinct lines $\fl_1,\fl_2,\fl_3,\fl_4$ passing through $q$ and being tangent to $T$ at $q_1,q_2,q_3,q_4$ respectively. The points $q_1,q_2,q_3,q_4$ will be sometimes referred to as the points of tangency (or tangent points) associated with $q$. The indeterminacy locus of $\iota_q$ is
\begin{equation*}
    \Ind(\iota_q)=\BBrc{q,q_1,q_2,q_3,q_4}.
\end{equation*}
In fact, the cubic involution $\iota_q$ contracts the conic curve $\fq$ passing through $q,q_1,q_2,q_3,q_4$ to $q$ and contracts the tangent line $\fl_i$ to $q_i$ for each $i$. The (affine) picture of the cubic curve, the conic curve, and the tangent lines is in Figure \ref{fig:ind(iq)}. We also note that if $q$ is not an inflection point of $T$, then there are four distinct tangent lines to $T$ though $q$, hence $\iota_q$ has five indeterminacy points $q,q_1,q_2,q_3,q_4$ on $\bp^2$ (see Proposition 12 in \cite{Blanc2008}).

\begin{figure}[h]
\begin{tikzpicture}[xscale=0.5, yscale=0.5]
    \draw[thick,red!85!black,domain=-4.466:-1.6,samples=100,smooth,variable=\x] plot (\x,{-sqrt(5*(\x)^2+8*\x)});
    \draw[thick,red!85!black,domain=-4.466:-1.6,samples=100,smooth,variable=\x] plot (\x,{sqrt(5*(\x)^2+8*\x)});
    \draw[thick,red!85!black,domain=0:2.866,samples=100,smooth,variable=\x] plot (\x,{-sqrt(5*(\x)^2+8*\x)});
    \draw[thick,red!85!black,domain=0:2.866,samples=100,smooth,variable=\x] plot (\x,{sqrt(5*(\x)^2+8*\x)});
    \draw[thick,red!85!black] (-1.6,0.1) -- (-1.6,-0.1) node[left] at (-1.6,0) {$\fq$};
    \draw[very thick] (-1,0.1) -- (-1,-0.1);
    \draw[thick,blue!66!black] (-2.667,-8) -- (2.667,8) node[below] at (-2.667,-8) {$\fl_3$};
    \draw[thick,blue!66!black] (-2.667,8) -- (2.667,-8) node[above] at (-2.667,8) {$\fl_4$};
    \draw[thick,blue!66!black] (-5,5) -- (5,-5) node[right] at (5,-5) {$\fl_2$};
    \draw[thick,blue!66!black] (-5,-5) -- (5,5) node[right] at (5,5) {$\fl_1$};
    \draw[domain=0:2.64,samples=100,smooth,variable=\x,very thick] plot (\x,{sqrt(\x^3+5*\x^2+4*\x)});
    \draw[domain=0:2.64,samples=100,smooth,variable=\x,very thick] plot (\x,{-sqrt(\x^3+5*\x^2+4*\x)});
    \draw[domain=-4:-1,samples=100,smooth,variable=\x,very thick] plot (\x,{sqrt((\x)^3+5*(\x)^2+4*\x)});
    \draw[domain=-4:-1,samples=100,smooth,variable=\x,very thick] plot (\x,{-sqrt((\x)^3+5*(\x)^2+4*\x)});
    \draw[blue!66!black,fill] (2,6) circle (4pt) node[right] {$q_3$};
    \draw[blue!66!black,fill] (2,-6) circle (4pt) node[right] {$q_4$};
    \draw[blue!66!black,fill] (-2,2) circle (4pt) node[above right] {$q_2$};
    \draw[blue!66!black,fill] (-2,-2) circle (4pt) node[below right] {$q_1$};
    \draw[red!85!black,fill] (0,0) circle (3pt) node[right] {$q$};
\end{tikzpicture}
\caption{Curves contracted by $\iota_q$}
\label{fig:ind(iq)}
\end{figure}
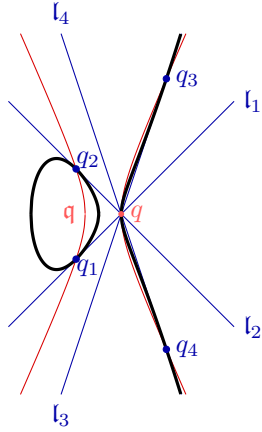

\begin{example}
\label{exp:6.1}
Let $T$ be the elliptic curve on $\bp^2$ over a field $K$ of characteristic $\Char{K}=0$ given by
\begin{equation*}
	y^2z=x(x+z)(x+4z)
\end{equation*}
and $q=[0:0:1]\in T$. The cubic involution associated with $T$ and $q$ is $\iota_q:\bp^2\dashrightarrow\bp^2$ defined by
\begin{equation}
\label{eqn:cubicinv}
	[x:y:z]\mapsto[5x^3-xy^2+8x^2z: 5x^2y-y^3+8xyz: -2x^3-5x^2z+y^2z].
\end{equation}
The involution $\iota_q$ contracts the four tangent lines
\begin{align*}
	\fl_1:y=x,\quad\fl_2:y=-x,\quad\fl_3:y=3x,\quad\fl_4:y=-3x
\end{align*}
through $q$ to the points
\begin{equation*}
	q_1=[-2:-2:1],\quad q_2=[-2:2:1],\quad q_3=[2:6:1],\quad q_4=[2:-6:1]
\end{equation*}
respectively. The conic through $q,q_1,\dots,q_4$ is
\begin{equation*}
	\fq: y^2=5x^2+8xz,
\end{equation*}
and $\iota_q$ contracts $\fq$ to $q$.
\end{example}

Now let $\iota_q$ be a fixed cubic involution, and let $\iota_t$ be another cubic involution. We will see in Proposition \ref{prop:actions} that the sequence $\BBrc{\deg\Par*{(\iota_q\circ\iota_t)^n}}$ grows quadratically for general $t\neq q$. So, by varying $t\in T$, we get a family of birational maps of $\bp^2$, whose general member has quadratic degree growth, whereas the special map $\iota_q\circ\iota_q$ is the identity on $\bp^2$. Moreover, we consider another family of birational self-maps whose general member is a composition of four cubic involutions, while the special member is again the identity. In this second family, the general member has dynamical degree $\lambda>1$.

\subsection{\texorpdfstring{Involution of $\bp^2\times T$}{}}

We define the families of maps as birational maps of a threefold. Let $X$ be the threefold $\bp^2\times T\subset\bp^2\times\bp^2$. Fix a point $q\in T$ that is not an inflection point, so that $\iota_q$ has five distinct indeterminacy points. We further endow $T$ with the structure of an elliptic with origin $o$, and assume that $q$ is a nontrivial $2$-torsion point. Define an involution $\varphi:X\dashrightarrow X$ by
\begin{equation*}
    (p,t)\mapsto(\iota_q(p),t).
\end{equation*}
The map $\varphi$ can also be viewed as a family of birational involutions of $\bp^2$, i.e., $\BBrc{\varphi_t:=\iota_q:t\in T}$. Moreover, we define an involution $\psi:X\dashrightarrow X$ by
\begin{equation*}
    (p,t)\mapsto(\iota_t(p),t)
\end{equation*}
and an involution $\chi:X\dashrightarrow X$ by
\begin{equation*}
    (p,t)\mapsto(\iota_{-t}(p),t),
\end{equation*}
where $-t$ is the inverse of $t$ with respect to the group law on $T$. So we have two other families of maps $\BBrc{\psi_t:=\iota_t:t\in T}$ and $\BBrc{\chi_t:=\iota_{-t}:t\in T}$.

For general $t$, $\iota_t$ has five indeterminacy points, and $\iota_{-t}$ has five others. All these ten points are distinct from the five points in $\Ind(\iota_q)$. Let us take the open subset $T^\circ$ of $T$ consisting of the point $q$ and those points $t\neq q$ such that $\#\Ind(\iota_t)=\#\Ind(\iota_{-t})=5$ and that the indeterminacy points of $\iota_t,\iota_{-t}$ and $\iota_q$ are all distinct. We replace $T$ with $T^\circ$ and keep the same notations $X=\bp^2\times T^\circ$, $\varphi=\varphi\vert_{\bp^2\times T^\circ}$, $\psi=\psi\vert_{\bp^2\times T^\circ}$, and $\chi=\chi\vert_{\bp^2\times T^\circ}$. We are now ready to state the indeterminacy locus of the maps $\varphi$, $\psi$ and $\chi$.

The indeterminacy locus of $\varphi$ contains five curves
\begin{equation*}
    B=\BBrc{q}\times T^\circ\quad\text{and}\quad B_i=\BBrc{q_i}\times T^\circ
\end{equation*}
for $i=1,\dots,4$. The indeterminacy locus of $\psi$ contains two irreducible curves, namely, the curve
\begin{equation*}
    C=\BBrc{(p,t):p=t}
\end{equation*}
and the curve $C'$. For $t\in T^\circ$, the restriction of the curve $C'$ to the fiber $\bp^2\times\BBrc{t}$ consists of the tangent points $t_1,t_2,t_3,t_4$ associated with $t$. The indeterminacy locus of $\chi$ contains the curve
\begin{equation*}
    D=\BBrc{(p,t):p=-t}
\end{equation*}
and the curve $D'$ whose restriction to $\bp^2\times\BBrc{t}$ are the four tangent points associated with $-t$. We note that
\begin{equation*}
	B\cap C\cap D=\BBrc{(q,q)}\quad\text{and}\quad B_i\cap C'\cap D'=\BBrc{(q_i,q)}
\end{equation*}
for $i=1,\dots,4$. These are the only intersection points of the indeterminate curves.

\setcounter{example}{5}
\begin{example}[continuted]
\label{exp:6.2}
Let $T$ be an elliptic curve on $\ba^2$ over a field $K$ of characteristic $\Char{K}=0$ given by
\begin{equation*}
	b^2=a(a-\alpha_1)(a-\alpha_2),
\end{equation*}
where $\alpha_1,\alpha_2$ are distinct nonzero elements of $K$, and $q=(0,0)\in T$. We take the open subset $T^\circ$ of $T$ as discussed above. Let $X=\ba^2\times T^\circ\subset\ba^2\times\ba^2$ with coordinates $((x,y),(a,b))$.

The curve $B$ is defined in $\ba^2\times T^\circ$ by
\begin{align*}
	x=y=0.
\end{align*}
The curve $C$ is defined by
\begin{equation}
\label{eqn:curveC}
	x-a=y-b=0.
\end{equation}
The curve $D$ is defined by
\begin{equation*}
	x-a=y+b=0.
\end{equation*}

	For $B_1,\dots,B_4$, $C'$, and $D'$, we need explicit formulas for the points of tangency associated with a given point on the elliptic curve, and these are called the half points in \cite{BZ2018}. For example, using the formulas in Example 2.2 and earlier equations in \cite{BZ2018} (with $\alpha_3=0$) the curves $B_1$ is defined in $\ba^2\times T^\circ$ by
\begin{equation*}
	x+\sqrt{-\alpha_1}\sqrt{-\alpha_2}=y-(-\sqrt{-\alpha_1}+\sqrt{-\alpha_2})x=0.
\end{equation*}
The curves $C'$ and $D'$ can be obtained as the two irreducible components of the following reducible curve defined in $\ba^2\times T^\circ$ by
\begin{align*}
	x^4-4ax^3+4(\alpha_1+\alpha_2)ax^2-2\alpha_1\alpha_2x^2-4\alpha_1\alpha_2ax+\alpha_1^2\alpha_2^2=y^2-x(x-\alpha_1)(x-\alpha_2)=0.
\end{align*}
In fact, by our choice of $T^\circ$, for a fixed $a$ with $(a,\pm b)\in T^\circ$ the above equations cut out exactly eight points in $\ba^2\times\BBrc{(a,b),(a,-b)}$. Specifically, four of the eight points lie in the fiber $\ba^2\times\BBrc{(a,b)}$, and the other four points lie in the fiber $\ba^2\times\BBrc{(a,-b)}$. Varying $a$ the first group of four points make up the curve $C'$ in $\ba^2\times T^\circ$, and the other group of four points make up the curve $D'$. Explicitly, the curve $C'$ is defined in $\ba^2\times T^\circ$ by the equations
\begin{equation}
\label{eqn:curveC'}
\begin{split}
	y^2-x(x-\alpha_1)(x-\alpha_2)&=0\\
	3x^2a-x^2(\alpha_1+\alpha_2)-2xa(\alpha_1+\alpha_2)+2x\alpha_1\alpha_2+a\alpha_1\alpha_2-y^2-2yb&=0\\
	xy^2+6xyb-y^2a+2xa(\alpha_1\alpha_2-\alpha_1^2-\alpha_2^2)+(x+a)(\alpha_1^2\alpha_2+\alpha_1\alpha_2^2)-2yb(\alpha_1+\alpha_2)-2\alpha_1^2\alpha_2^2&=0.
\end{split}
\end{equation}
With these defining equations, a computer calculation shows that $C'$ is irreducible for a general pair $(\alpha_1,\alpha_2)$. The defining equations for $D'$ are obtained in a similar way.

We notice that the equations (\ref{eqn:curveC}) define a curve $\overline{C}$ in $\ba^2\times T$, and  the equations (\ref{eqn:curveC'}) define a curve $\overline{C}'$ in $\ba^2\times T$. Since $\iota_t$ has five distinct indeterminacy points whenever $t$ is not an inflection point of $T$, we see that $\overline{C}$ and $\overline{C}'$ intersect only in the fibers $\ba^2\times\BBrc{t}$ with $t$ an inflection point of $T$. Since we have removed these points from $T$ to obtain $T^\circ$, $C$ and $C'$ are disjoint in $\ba^2\times T^\circ$. Similarly, $D$ and $D'$ are disjoint in $\ba^2\times T^\circ$.

All the irreducible curves discussed in this example can be readily checked to be smooth.
\hfill\par
\end{example}

Now we return to the discussion of families of birational maps. Using $\varphi,\psi$ and $\chi$, we can define various families of birational self-maps. We will first study the composition $\varphi\circ\psi:X\dashrightarrow X$, i.e., the family of maps $\BBrc{\iota_q\circ\iota_t:t\in T^\circ}$. We will then take the composition $\vartheta:=\varphi\circ\chi\circ\varphi\circ\psi$, which is the family of maps $\BBrc{\iota_q\circ\iota_{-t}\circ\iota_q\circ\iota_t:t\in T^\circ}$.

Let us fix some notations for the rest of the discussion. Given a threefold $X$ with a fibration $X\to T$, let $X_t$ denote the fiber of $X$ over $t\in T$. For a map $f:X\dashrightarrow X$ preserving $X\to T$, let $f_t$ denote the restriction $f\vert_{X_t}$.

\subsection{\texorpdfstring{Indeterminate curves of $\varphi\circ\psi$}{}}

Let us write $\cb:=B\cup B_1\cup\dotsm\cup B_4$, and $\cc:=C\cup C'$. By Proposition 2.1 in \cite{Blanc2013}, the cubic involution $\iota_q:\bp^2\dashrightarrow\bp^2$ can be lifted to an automorphism $\tilde{\iota}_q$ on the blow-up $S$ of $\bp^2$ at the five points $p,p_1,\dots,p_4$. The blow-up of $X=\bp^2\times T^\circ$ at $\cb$ is canonically isomorphic to $S\times T^\circ$, and the map $\tilde{\iota}_q\times\text{id}_{T^\circ}$ provides a regularization of $\varphi$ on $\Bl_\cb{X}$. Furthermore, $\iota_t$ is regularized and lifted to an automorphism $\tilde{\iota}_t$ for general $t\in T$ on the blow-up of $\bp^2$ at $t,t_1,\dots,t_4$. Letting the parameters in the defining equations of $\tilde{\iota}_q$ vary with $t\in T^\circ$ yields the defining equations of the automorphisms $\tilde{\iota}_t$. Together, these fiberwise automorphisms define an automorphism of $\Bl_\cc{X}$. Since $\varphi$ fixes the curves in $\cc$ pointwise, the lift of $\varphi$ on $\Bc\Bb{X}$ is also an automorphism. Similary, the lift of $\psi$ is an automorphism on $\Bb\Bc{X}$ (cf. Proposition 2.2 in \cite{Blanc2013} and Section 4.4.4 in \cite{Bot2021}).

Now we consider the lifts of $\varphi$ and $\psi$ on $\Bc\Bb{X}$, say $\tphi$ and $\tpsi$. The observation above suggests that $\tphi$ is an automorphism on $\Bc\Bb{X}$. The map $\tpsi$ is in fact a pseudo-automorphism on $\Bc\Bb{X}$. Indeed, although $X=\bp^2\times T^\circ$ is not complete, the map $\Bc\Bb{X}\dashrightarrow\Bb\Bc{X}$ is obtained explicitly by interchanging the order of blow-ups locally around each (transverse) intersection point of the curves $\cb$ and $\cc$. Near each such point the map is locally the Atiyah flop, and hence it factors as the composition of these Atiyah flops. The map $\tpsi$ is conjugate to the automorphism $\tpsi^+:\Bb\Bc{X}\to\Bb\Bc{X}$, the lift of $\psi$ on $\Bb\Bc{X}$, by these Atiyah flops with flopping curves in the fiber over $q$ as the intersection points are contained in the fiber over $q$. Therefore, $\tpsi$ has indeterminacy on the fiber over $q$. Consequently, the map $\tphi\circ\tpsi$ has no indeterminacy on fibers over $t\neq q$, while it has indeterminate curves on the fiber over $q$.

\subsubsection{\texorpdfstring{Blow-up, first round}{}}
\label{subsec:blow-upfirstround}

We denote by $\Bb{X}$ the blow-up of $X$ at $\cb$ with exceptional divisors $E_B$ and $E_{B_i}$ over $B$ and $B_i$ respectively. For $t\in T^\circ$, let $(\Bb{X})_t$ be the fiber of $\Bb{X}$ over $t$. Then $(\Bb{X})_t$ is the blow-up of $X_t\cong\bp^2$ at the five points
\begin{equation*}
	(q,t),(q_1,t),(q_2,t),(q_3,t),(q_4,t)\in X_t=\bp^2\times\BBrc{t},
\end{equation*}
where $q,q_1,q_2,q_3,q_4$ are the indeterminacy points of $\iota_t=\iota_q$, regarded as a birational involution of $\bp^2$. The involution $\varphi$ lifts to a regular involution $\hat{\varphi}$ on $\Bb{X}$. On each fiber $(\Bb{X})_t$, $\hat{\varphi}\vert_{(\Bb{X})_t}:(\Bb{X})_t\to(\Bb{X})_t$ swaps the exceptional curve over $(q,t)$ and the strict transform of the conic curve $\fq\times\BBrc{t}$, and exchanges the other four exceptional curves and the strict transforms of the four tangent lines respectively. In particular, on the fiber $(\Bb{X})_q$, we have the five exceptional curves
\begin{equation*}
    \fb:=E_B\cap(\Bb{X})_q\quad\text{and}\quad \fb_i:=E_{B_i}\cap(\Bb{X})_q
\end{equation*}
for $i=1,\dots,4$. Moreover, we still denote by $\mathfrak{q}$ the strict transform of the curve $\fq\times\BBrc{q}\subset X_q$ on $\Bb{X}$, and by $\fl_i$ the strict transforms of the curves $\fl_i\times\BBrc{q}\subset X_q$ on $\Bb{X}$ for $i=1,\dots,4$. We have
\begin{equation*}
    \hat{\varphi}(\fb)=\fq\quad\text{and}\quad\hat{\varphi}(\fb_i)=\fl_i
\end{equation*}
for $i=1,\dots,4$.

Let $\Bc{X}$ be the blow-up of $X$ at $\cc$ with the exceptional divisors $E_{C}^+$ and $E_{C'}^+$ over $C$ and $C'$  respectively. The map $\psi$ lifts to an automorphism $\hat{\psi}^+$ on $\Bc{X}$. On the fiber $(\Bc{X})_q$ over $q$, there are the exceptional curves
\begin{equation*}
    \fc^+=E_C^+\cap(\Bc{X})_q\quad\text{and}\quad\fc_1^+\sqcup\fc_2^+\sqcup\fc_3^+\sqcup\fc_4^+=E_{C'}^+\cap(\Bc{X})_q;
\end{equation*}
and there are the strict transform $\fq^+$ of the conic curve and the strict transforms $\fl_i^+$ of the tangent lines for $i=1,\dots,4$. We have
\begin{equation*}
    \hat{\psi}^+(\fc^+)=\fq^+\quad\text{and}\quad\hat{\psi}^+(\fc_i^+)=\fl_i^+
\end{equation*}
for $i=1,\dots,4$.

\subsubsection{Blow-up, second round}

We want to relate $\Bb{X}$ and $\Bc{X}$.

We can blow-up $\Bb{X}$ at the strict transform of $\cc$ to get $\Bc\Bb{X}$. We still denote by $E_B$ and $E_{B_i}$ the exceptional divisors over $B$ and $B_i$, respectively, on $\Bc\Bb{X}$; and we denote by $E_{C}$ and $E_{C'}$ the exceptional divisors over $C$ and $C'$, respectively, on $\Bc\Bb{X}$. We write $\widetilde{X}:=\Bc\Bb{X}$.

In the second round of blow-ups, we actually blow-up each fiber of $\Bb{X}$ further at five additional points. For $t\neq q$, the fiber of $\tX$ over $t$ is isomorphic to the blow-up of $X_t\cong\bp^2$ at the ten distinct points $(\cb\cup\cc)\cap X_t$. However, since the curves $\cb$ and $\cc$ intersect in the fiber $X_q$ at the five points
\begin{equation*}
	(q,q),(q_1,q),(q_2,q),(q_3,q),(q_4,q),
\end{equation*}
the fiber $\tX_q$ is the blow-up of $X_q\cong\bp^2$ at five points and then at five infinitely near points. More specifically, $(\Bb{X})_q$ is the blow-up of $X_q$ at $(q,q),(q_1,q),(q_2,q),(q_3,q),(q_4,q)$ with exceptional curves $\fb,\fb_1,\fb_2,\fb_3,\fb_4$. The curves $\fb,\fb_1,\fb_2,\fb_3,\fb_4$ intersect the strict transforms of $C,C'$ transversally at the points $(q',q),(q_1',q),(q_2',q),(q_3',q),(q_4',q)$ in $(\Bb{X})_q$ as shown in Figure \ref{fig:BXq}. Therefore, $\tX_q$ is the blow-up of $(\Bb{X})_q$ at such points.

\begin{figure}[h]
\begin{tikzpicture}[scale=0.72]
	\draw[very thick] (0,0) ellipse (5 and 4.2);
	\node at (0,3.5) {$(\Bb{X})_q$};
    \draw[red!65!white,very thick] (-1,-0.75) -- (1,0.75) node[left] at (-0.8,-0.75) {$\fb$};
    \draw[red!66!black,very thick] (1,1.25) -- (3,2.75) node[left] at (1.2,1.25) {$\fb_3$};
    \draw[red!66!black,very thick] (1,-2.75) -- (3,-1.25) node[left] at (1.2,-2.75) {$\fb_4$};
    \draw[red!66!black,very thick] (-3,1.25) -- (-1,2.75) node[left] at (-2.8,1.25) {$\fb_2$};
    \draw[red!66!black,very thick] (-3,-2.75) -- (-1,-1.25) node[left] at (-2.8,-2.75) {$\fb_1$};
    \draw[very thick, dashed] (-0.75,1.25) to[bend left] (0,0);\draw[very thick] (0,0) to[bend right] (0.75,-1.25) node[below] {$C$};
    \draw[very thick, dashed] (-2.75,-0.75) to[bend left] (-2,-2);\draw[very thick] (-2,-2) to[bend right] (-1.25,-3.25) node[above right] at (-1.25-0.4,-3.25) {$C'$};
    \draw[very thick, dashed] (-2.75,3.25) to[bend left] (-2,2);\draw[very thick] (-2,2) to[bend right] (-1.25,0.75) node[below] {$C'$};
    \draw[very thick, dashed] (1.25,-0.75) to[bend left] (2,-2);\draw[very thick] (2,-2) to[bend right] (2.75,-3.25) node[above right] at (2.75-0.4,-3.25) {$C'$};
    \draw[very thick, dashed] (1.25,3.25) to[bend left] (2,2);\draw[very thick] (2,2) to[bend right] (2.75,0.75) node[below] {$C'$};
    \draw[green!45!black,fill] (2,2) circle (3pt) node[left] at (2,2+0.2) {$q_3'$};
    \draw[green!45!black,fill] (2,-2) circle (3pt) node[left] at (2,-2+0.2) {$q_4'$};
    \draw[green!45!black,fill] (-2,2) circle (3pt) node[left] at (-2,2+0.2) {$q_2'$};
    \draw[green!45!black,fill] (-2,-2) circle (3pt) node[left]  at (-2,-2+0.2) {$q_1'$};
    \draw[blue!66!black,fill] (0,0) circle (3pt) node[left] at (0,0+0.2) {$q'$};
\end{tikzpicture}
\begin{tikzpicture}[scale=0.72]
	\draw[very thick] (0,0.5) ellipse (5 and 4.2);
	\node at (0,4) {$\tX_q$};
    \draw[red!65!white,very thick] (-1,-0.75+0.5) -- (1,0.75+0.5) node[left] at (-0.8,-0.75+0.5) {$\fb$};
    \draw[red!66!black,very thick] (1,1.25+0.5) -- (3,2.75+0.5) node[left] at (1.2,1.25+0.5) {$\fb_3$};
    \draw[red!66!black,very thick] (1,-2.75+0.5) -- (3,-1.25+0.5) node[left] at (1.2,-2.75+0.5) {$\fb_4$};
    \draw[red!66!black,very thick] (-3,1.25+0.5) -- (-1,2.75+0.5) node[left] at (-2.8,1.25+0.5) {$\fb_2$};
    \draw[red!66!black,very thick] (-3,-2.75+0.5) -- (-1,-1.25+0.5) node[left] at (-2.8,-2.75+0.5) {$\fb_1$};
    \draw[very thick, dashed] (-0.75,1.25-0.5) to[bend left] (0,0-0.5);\draw[very thick] (0,0-0.5) to[bend right] (0.75,-1.25-0.5);
    \draw[very thick, dashed] (-2.75,-0.75-0.5) to[bend left] (-2,-2-0.5);\draw[very thick] (-2,-2-0.5) to[bend right] (-1.25,-3.25-0.5);
    \draw[very thick, dashed] (-2.75,3.25-0.5) to[bend left] (-2,2-0.5);\draw[very thick] (-2,2-0.5) to[bend right] (-1.25,0.75-0.5);
    \draw[very thick, dashed] (1.25,-0.75-0.5) to[bend left] (2,-2-0.5);\draw[very thick] (2,-2-0.5) to[bend right] (2.75,-3.25-0.5);
    \draw[very thick, dashed] (1.25,3.25-0.5) to[bend left] (2,2-0.5);\draw[very thick] (2,2-0.5) to[bend right] (2.75,0.75-0.5);
    \draw[very thick, dashed] (-0.75,1.25+1.5) to[bend left] (0,0+1.5);\draw[very thick] (0,0+1.5) to[bend right] (0.75,-1.25+1.5);
    \draw[very thick, dashed] (-2.75,-0.75+1.5) to[bend left] (-2,-2+1.5);\draw[very thick] (-2,-2+1.5) to[bend right] (-1.25,-3.25+1.5);
    \draw[very thick, dashed] (-2.75,3.25+1.5) to[bend left] (-2,2+1.5);\draw[very thick] (-2,2+1.5) to[bend right] (-1.25,0.75+1.5);
    \draw[very thick, dashed] (1.25,-0.75+1.5) to[bend left] (2,-2+1.5);\draw[very thick] (2,-2+1.5) to[bend right] (2.75,-3.25+1.5);
    \draw[very thick, dashed] (1.25,3.25+1.5) to[bend left] (2,2+1.5);\draw[very thick] (2,2+1.5) to[bend right] (2.75,0.75+1.5);
    \draw[very thick, dashed] (-0.75,1.25-0.5) -- (-0.75,1.25+1.5) node[right] at (-0.75-0.2,1.25+0.3) {$E_C$};
    \draw[very thick, dashed] (-2.75,-0.75-0.5) -- (-2.75,-0.75+1.5);
    \draw[very thick, dashed] (-2.75,3.25-0.5) -- (-2.75,3.25+1.5) node[left] at (-2.75+0.2,3.25-0.5) {$E_{C'}$};
    \draw[very thick, dashed] (1.25,-0.75-0.5) -- (1.25,-0.75+1.5);
    \draw[very thick, dashed] (1.25,3.25-0.5) -- (1.25,3.25+1.5) node[left] at (1.25+0.2,3.25-0.5)  {$E_{C'}$};
    \draw[very thick, dashed] (0.75,-1.25-0.5) -- (0.75,-1.25+1.5);
    \draw[very thick, dashed] (-1.25,-3.25+1.5) -- (-1.25,-3.25-0.5) node[right] at (-1.25-0.2,-3.25+1.5) {$E_{C'}$};
    \draw[very thick, dashed] (-1.25,0.75+1.5) -- (-1.25,0.75-0.5);
    \draw[very thick, dashed] (2.75,-3.25+1.5) -- (2.75,-3.25-0.5) node[right] at (2.75-0.2,-3.25+1.5) {$E_{C'}$};
    \draw[very thick, dashed] (2.75,0.75+1.5) -- (2.75,0.75-0.5);
    \draw[blue!66!black,very thick] (0,-0.5) -- (0,1.5) node[left] at (0.5,-0.5) {$\fc$};
    \draw[green!45!black,very thick] (2,1.5) -- (2,3.5) node[left] at (2.15,1.5) {$\fc_3$};
    \draw[green!45!black,very thick] (2,-2.5) -- (2,-0.5) node[left] at (2.15,-2.5) {$\fc_4$};
    \draw[green!45!black,very thick] (-2,2-0.5) -- (-2,2+1.5) node[left] at (-1.85,1.5) {$\fc_2$};
    \draw[green!45!black,very thick] (-2,-2-0.5) -- (-2,-2+1.5) node[left] at (-1.85,-2.5) {$\fc_1$};
    \draw[red!66!black,fill] (2,2.5) circle (2pt);
    \draw[red!66!black,fill] (2,-1.5) circle (2pt);
    \draw[red!66!black,fill] (-2,2.5) circle (2pt);
    \draw[red!66!black,fill] (-2,-1.5) circle (2pt);
    \draw[red!65!white,fill] (0,0.5) circle (2pt);
\end{tikzpicture}
\caption{Blow-ups of $X_q$}
\label{fig:BXq}
\end{figure}
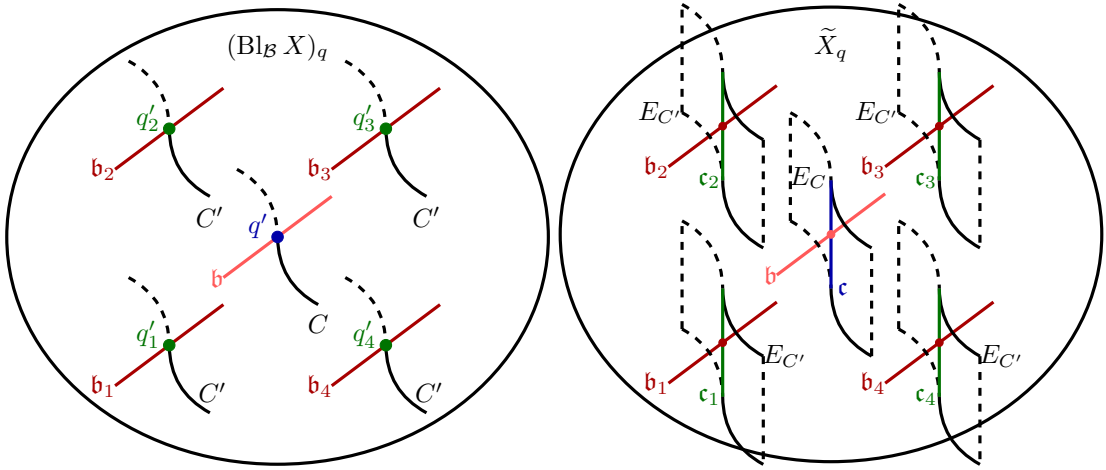

Let
\begin{equation*}
    \fc:=E_{C}\cap \tX_q\quad\text{and}\quad\fc_1\sqcup\fc_2\sqcup\fc_3\sqcup\fc_4:=E_{C'}\cap \tX_q.
\end{equation*}
We still write $\fb$ for the strict transform of $\fb$ in $\tX$, and $\fb_i$ for the strict transform of $\fb_i$ in $\tX$ for each $i$.
Since the map $\widehat{\varphi}$ fixes the strict transform $C$ and $C'$ pointwise, it lifts to an automorphism $\tphi$ on $\tX$, and $\tphi$ preserves the new exceptional divisors $E_{C},E_{C'}$ as $C$ and $C'$ are fixed by $\varphi$. On $\tX_q$, we have
\begin{equation*}
	\tphi(\fc)=\fc\quad\text{and}\quad\tphi(\fc_i)=\fc_i
\end{equation*}
for $i=1,\dots,4$, and
\begin{equation*}
	\tphi(\fb)=\fq\quad\text{and}\quad\tphi(\fb_i)=\fl_i,
\end{equation*}
where we use the same notations for the strict transforms of the conic curve and tangent lines on $\tX$.

On the other hand, we can blow-up $\Bc{X}$ at the strict transform of $\cb$ to get $\tX^+:=\Bb\Bc{X}$. The exceptional divisors over $B$ and $B_i$ in $\tX^+$ are $E_B^+$ and $E_{B_i}^+$ respectively, and the exceptional divisors over $C$ and $C'$ in $\tX^+$ are $E_{C}^+$ and $E_{C'}^+$ respectively. On the fiber $(\tX^+)_q$, we have
\begin{equation*}
    \fb^+:=E_B^+\cap\tX^+_q\quad\text{and}\quad\fb_i^+=E_{B_i}^+\cap\tX^+_q
\end{equation*}
for $i=1,\dots,4$; and we again write $\fc^+$ and $\fc_i^+$ for the strict transforms of $\fc^+$ and $\fc_i^+$ in $\tX^+$ for each $i$. The map $\hat{\psi}^+$ lifts to an automorphism $\tpsi^+$ on $\tX^+$, which preserves the new exceptional divisors $E_{B}^+,E_{B_1}^+,E_{B_2}^+,E_{B_3}^+,E_{B_4}^+$. In particular, we have
\begin{equation*}
	\tpsi^+(\fb^+)=\fb^+\quad\text{and}\quad\tpsi^+(\fb_i^+)=\fb_i^+,
\end{equation*}
for $i=1,\dots,4$, and
\begin{equation*}
	\tpsi^+(\fc^+)=\fq^+\quad\text{and}\quad\tpsi^+(\fc_i^+)=\fl_i^+,
\end{equation*}
for $i=1,\dots,4$.

\subsubsection{\texorpdfstring{Factorization of $\tilde{\varphi}\circ\tilde\psi$}{}}

Now the blow-ups $\tX$ and $\tX^+$ are related by a map $\sigma:\tX\dashrightarrow\tX^+$, which is a composition of five Atiyah flops. The five pairwise disjoint flopping curves for $\sigma$ are $\fb,\fb_1,\fb_2,\fb_3,\fb_4$; and the five pairwise disjoint flopped curves, which are the flopping curves for $\sigma^{-1}$, are $\fc^+,\fc^+_1,\fc^+_2,\fc^+_3,\fc^+_4$. 

Let us denote by $\tpsi$ the lift of $\psi$ on $\tX$. We factorize the map $\tphi\circ\tpsi$ in terms of flops and isomorphisms as follows:
\begin{equation*}
\begin{tikzcd}
	\tX \arrow[r,dashed,"\tpsi"] \arrow[d,dashed,"\sigma"'] & \tX \rar["\tphi"] & \tX\\
    \tX^+ \rar["\tpsi^+"] & \tX^+ \arrow[u,dashed,"\sigma^{-1}"']
\end{tikzcd}.
\end{equation*}
Although $\tpsi^+$ is an automorphism on $\tX^+$, $\tpsi$ is a pseudo-automorphism on $\tX$, which is conjugate to $\tpsi^+$ by $\sigma$. We note that $\sigma$ is undefined at $\fb,\fb_1,\fb_2,\fb_3,\fb_4$, and $\sigma^{-1}$ is undefined at $\fc^+,\fc^+_1,\fc^+_2,\fc^+_3,\fc^+_4$. So the composition $\tpsi=\sigma^{-1}\circ\tpsi^+\circ\sigma:\tX\dashrightarrow\tX$ is undefined along the ten curves
\begin{equation*}
	\fb,\fb_1,\fb_2,\fb_3,\fb_4,\fq,\fl_1,\fl_2,\fl_3,\fl_4.
\end{equation*}
Since $\tphi$ is a regular involution exchanging $\fb,\fb_1,\fb_2,\fb_3,\fb_4$ and $\fq,\fl_1,\fl_2,\fl_3,\fl_4$, the composition $\tilde{\varphi}\circ\tilde\psi:\tX\dashrightarrow\tX$ is also a pseudo-automorphism that is undefined at the same ten curves.

\subsection{Dynamics on general fibers}

For any divisor, we use the same notation to denote its class in the (relative) Néron-Severi space $N^1$.

Let $\pi_1:\bp^2\times T^\circ\to\bp^2$ be the first projection, and let $\pi:\tX\to X$ be the blow-up map. For a hyperplane divisor $h\in N^1(\bp^2)$, we write $H:=\pi^*(\pi_1^*(h))\in N^1(\tX/T^\circ)$. For any $t\neq q$, we write $h_t$ for $H\vert_{\tX_t}$. To distinguish from the exceptional curves in the fiber over $q$, we denote by $b_t:=E_B\vert_{\tX_t}$, $c_t:=E_C\vert_{\tX_t}$, $b_{i,t}:=E_{B_i}\vert_{\tX_t}$, $c_{i,t}:=E_{C_i}\vert_{\tX_t}$, $i=1,\dots,4$, the classes of the exceptional curves of $\tX_t$. Together with $h_t$, they form a set of basis elements for $N^1(\tX_t)$:
\begin{equation*}
	\BBrc{h_t,b_t,b_{1,t},b_{2,t},b_{3,t},b_{4,t},c_t,c_{1,t},c_{2,t},c_{3,t},c_{4,t}}.
\end{equation*}
We note that $\tphi_t=\tphi\vert_{\tX_t}$ and $\tpsi_t=\tpsi\vert_{\tX_t}$ are the lifts of $\iota_q$ and $\iota_t$, respectively, on the blow-up of $\bp^2$ at the ten indeterminacy points. Both maps are regular, and their induced action on $N^1(\tX_t)$ was computed in \cite{Bot2021}. The action matrices can also be obtained from the matrix in Lemma 3.3 of \cite{Blanc2013} by adding a diagonal block equal to the $5\times 5$ identity matrix, as $\iota_q$ fixes the points in $\Ind(\iota_t)$, and vice versa. In particular, we have
\begin{prop}
\label{prop:actions}
For $t\neq q$, the map $(\tphi_t)^*:N^1(\tX_t)\to N^1(\tX_t)$ is given by
\begin{equation*}
M_{\varphi}:=
\scriptsize{
\begin{bmatrix}
3 & 2 & 1 & 1 & 1 & 1 & 0 & 0 & 0 & 0 & 0 \\
-2 & -1 & -1 & -1 & -1 & -1 & 0 & 0 & 0 & 0 & 0 \\
-1 & -1 & -1 & 0 & 0 & 0 & 0 & 0 & 0 & 0 & 0 \\
-1 & -1 & 0 & -1 & 0 & 0 & 0 & 0 & 0 & 0 & 0 \\
-1 & -1 & 0 & 0 & -1 & 0 & 0 & 0 & 0 & 0 & 0 \\
-1 & -1 & 0 & 0 & 0 & -1 & 0 & 0 & 0 & 0 & 0 \\
0 & 0 & 0 & 0 & 0 & 0 & 1 & 0 & 0 & 0 & 0 \\
0 & 0 & 0 & 0 & 0 & 0 & 0 & 1 & 0 & 0 & 0 \\
0 & 0 & 0 & 0 & 0 & 0 & 0 & 0 & 1 & 0 & 0 \\
0 & 0 & 0 & 0 & 0 & 0 & 0 & 0 & 0 & 1 & 0 \\
0 & 0 & 0 & 0 & 0 & 0 & 0 & 0 & 0 & 0 & 1
\end{bmatrix}
},
\end{equation*}
and the map $(\tpsi_t)^*:N^1(\tX_t)\to N^1(\tX_t)$ is given by
\begin{equation*}
M_{\psi}:=
\scriptsize{
\begin{bmatrix}
3 & 0 & 0 & 0 & 0 & 0 & 2 & 1 & 1 & 1 & 1 \\
0 & 1 & 0 & 0 & 0 & 0 & 0 & 0 & 0 & 0 & 0 \\
0 & 0 & 1 & 0 & 0 & 0 & 0 & 0 & 0 & 0 & 0 \\
0 & 0 & 0 & 1 & 0 & 0 & 0 & 0 & 0 & 0 & 0 \\
0 & 0 & 0 & 0 & 1 & 0 & 0 & 0 & 0 & 0 & 0 \\
0 & 0 & 0 & 0 & 0 & 1 & 0 & 0 & 0 & 0 & 0 \\
-2 & 0 & 0 & 0 & 0 & 0 & -1 & -1 & -1 & -1 & -1 \\
-1 & 0 & 0 & 0 & 0 & 0 & -1 & -1 & 0 & 0 & 0 \\
-1 & 0 & 0 & 0 & 0 & 0 & -1 & 0 & -1 & 0 & 0 \\
-1 & 0 & 0 & 0 & 0 & 0 & -1 & 0 & 0 & -1 & 0 \\
-1 & 0 & 0 & 0 & 0 & 0 & -1 & 0 & 0 & 0 & -1
\end{bmatrix}
}.
\end{equation*}
\end{prop}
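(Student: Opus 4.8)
The plan is to read off both matrices directly from the geometry of the two cubic involutions, exploiting the fact that each involution fixes the cubic $T$ pointwise. Since $\tphi_t$ and $\tpsi_t$ are involutions, pullback and pushforward coincide, so it suffices to compute the image of each basis class. Moreover the whole configuration is symmetric under interchanging the roles of $q$ and $t$ (equivalently, the $b$-classes with the $c$-classes), so I would carry out the computation only for $M_\varphi$ and obtain $M_\psi$ by this symmetry.

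First I would record that, for $t\neq q$, the fiber $\tX_t$ is the blow-up of $X_t\cong\bp^2$ at ten \emph{distinct} points: the five points $\Ind(\iota_q)=\{q,q_1,q_2,q_3,q_4\}$, with exceptional classes $b_t,b_{t,1},\dots,b_{t,4}$, and the five points $\Ind(\iota_t)=\{t,t_1,\dots,t_4\}$, with classes $c_t,c_{t,1},\dots,c_{t,4}$. Distinctness is exactly the genericity hypothesis $\Ind(\iota_t)\cap\Ind(\iota_q)=\varnothing$. The key observation, which decouples the two families of exceptional classes, is that all five indeterminacy points of $\iota_t$ lie on the fixed cubic $T$: the point $t$ lies on $T$ by the definition of the curve $C$, and the tangent points $t_1,\dots,t_4$ lie on $T$ by construction. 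Since $\iota_q$ fixes $T$ pointwise and is a local isomorphism near each such point (again by disjointness of the indeterminacy loci), its lift $\tphi_t$ carries the exceptional divisor over each $c$-point isomorphically to itself. Hence $\tphi_t$ fixes each class $c_t,c_{t,i}$, which accounts for the identity block together with the vanishing of all cross-entries in rows and columns $7$ through $11$ of $M_\varphi$.

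For the remaining classes I would use the explicit description of $\iota_q$ as the cubic involution. It is given by the homaloidal net of plane cubics with a double base point at $q$ and simple base points at $q_1,\dots,q_4$, whence $\tphi_t^*(h_t)=3h_t-2b_t-\sum_i b_{t,i}$. Geometrically, $\iota_q$ contracts the conic $\fq$ through $q,q_1,\dots,q_4$ to $q$ and each tangent line $\fl_i$ to $q_i$; being an involution, this reads $\tphi_t(\fb)=\fq$ and $\tphi_t(\fb_i)=\fl_i$. Because $\fq$ and the $\fl_i$ pass through none of the five $c$-points, their strict-transform classes carry no $c$-corrections, giving $[\fq]=2h_t-b_t-\sum_i b_{t,i}$ and $[\fl_i]=h_t-b_t-b_{t,i}$, that is $b_t\mapsto 2h_t-b_t-\sum_i b_{t,i}$ and $b_{t,i}\mapsto h_t-b_t-b_{t,i}$. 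Assembling these images as the columns produces exactly $M_\varphi$, and the $q\leftrightarrow t$ symmetry then yields $M_\psi$.

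I expect the main obstacle to be the geometric input rather than the linear algebra: one must verify that the contracted conic $\fq$ and the tangent lines $\fl_i$ pass through none of the five base points of the opposite involution, so that no off-diagonal corrections linking the $b$- and $c$-blocks appear. Establishing this cleanly relies on the genericity conditions imposed on $t$ (the disjointness of the various indeterminacy loci) together with the fact that $T$ is fixed pointwise; once these are in hand, the intersection-theoretic bookkeeping on the blow-up is routine.
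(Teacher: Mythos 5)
Your proposal is correct, and it is worth noting that the paper does not actually prove this proposition at all: the matrices are quoted as a computation from \cite{Bot2021} (``as computed in \cite{Bot2021}''), so your argument supplies a self-contained derivation of something the paper takes on citation. Your route is the natural direct one: since $\tphi_t$ is a regular involution of $\tX_t$, pushforward and pullback coincide, so the matrix is assembled column-by-column from images of basis classes; the homaloidal net of $\iota_q$ (cubics with a double point at $q$ and simple points at $q_1,\dots,q_4$, consistent with the explicit degree-$3$ formulas in the paper) gives the first column, the contracted conic $\fq$ and tangent lines $\fl_i$ give the $b$-columns, and the $c$-block is the identity. The one step you flag as the ``main obstacle'' --- that $\fq$ and the $\fl_i$ contain none of $t,t_1,\dots,t_4$, and that $\iota_q$ is a local isomorphism at those points --- closes in two lines by exactly the mechanism you name: if $x$ lies on a contracted curve of $\iota_q$, lies on $T$, and is not in $\Ind(\iota_q)$, then on one hand $\iota_q(x)=x$ (the cubic is fixed pointwise wherever $\iota_q$ is defined), and on the other hand $\iota_q(x)\in\{q,q_1,\dots,q_4\}$ (each contracted curve maps to one of these points), so $x\in\Ind(\iota_q)$, a contradiction; hence every contracted curve of $\iota_q$ meets $T$ only inside $\Ind(\iota_q)$, which by the standing genericity hypothesis is disjoint from $\Ind(\iota_t)=\{t,t_1,\dots,t_4\}\subset T$. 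This single observation simultaneously shows that the five $c$-points avoid the contracted curves (so $\iota_q$ is biregular near each of them and fixes it, giving the identity block and the vanishing cross-entries) and that the strict transforms of $\fq$ and $\fl_i$ carry no $c$-corrections, so their classes are $2h_t-b_t-\sum_i b_{t,i}$ and $h_t-b_t-b_{t,i}$ as you state. Finally, the $q\leftrightarrow t$ symmetry yielding $M_\psi$ is legitimate, since the argument uses only data that is symmetric under exchanging the two involutions.
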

\begin{cor}
The eigenvalues of $M_{\psi}M_{\varphi}$ have absolute value $1$, and the largest Jordan block of $M$ is $\scriptsize{\begin{bmatrix}1&1&0\\0&1&1\\0&0&1\end{bmatrix}}$. Hence, the sequence $\BBrc{\deg((\tphi\circ\tpsi)_t^n)}_n$ grows quadratically for $t\neq q$.
\end{cor}
On the other hand, the map $(\tphi\circ\tpsi)_q$ is the identity map of the fiber $\tX_q$. Write $\fh:=H\vert_{\tX_q}$. By the formula (\ref{eqn:differenceformula}) in Proposition \ref{prop:formula}, the difference
\begin{equation*}
	(\tphi\circ\tpsi)^*(H)\vert_{\tX_q}-\Par{(\tphi\circ\tpsi)_q}^*(\fh)=2\fq+\sum\fl_i+4\fb+2\sum\fb_i
\end{equation*}
is a sum of the flopping curves (indeterminate curves); the indeterminate curves contribute the difference between
\begin{equation*}
	\deg_{\fh}((\tphi\circ\tpsi)_q^n)\quad\text{and}\quad\deg_{h_t}((\tphi\circ\tpsi)_t^n)
\end{equation*}
for $t\neq q$. In the rest of the paper, we will show that the dynamical complexity persists but is concentrated around the indeterminacy locus. Once we blow-up at the indeterminate curves and then look at the induced maps on the exceptional divisors, the interesting dynamics will be seen again.

As for the composition of four involutions $\vartheta=\varphi\circ\chi\circ\varphi\circ\psi$, i.e., the family $\BBrc{\iota_q\circ\iota_{-t}\circ\iota_q\circ\iota_t:t\in T^\circ}$, the map $\iota_q\circ\iota_{-t}\circ\iota_q\circ\iota_t$ can be regularizaed and has dynamical degree $17+12\sqrt{2}$ for $t\neq q$, while the limiting map is again the identity map when $t=q$. The fact that dynamical degree drops on the special fiber in a family of birational maps is consistent with the lower semicontinuity theorem on dynamical degree by \cite{Xie2015}. We will also blow-up at the indeterminate curves of $\vartheta$ in the fiber over $q$. The interesting dynamics will be recovered on the exceptional divisors too.

\section{Induced Map on Divisors}
\label{sec:4}

In this section we aim to blow-up $\tX$ at $\fq$ and obtain (a map which is conjugate to) the birational self-map $E_{\fq}\dashrightarrow E_{\fq}$ induced by $\tphi\circ\tpsi$.

\subsection{Some reductions}

To look at the exceptional divisor $E_\fq$, it suffices to just blow-up $X=\bp^2\times T^\circ$ at $\fq\subset X$, rather than blowing-up $\tX$ at the strict transform of $\fq$ on $\tX$. Furthermore, we just want to blow-up $\bp^2\times T$ at $\fq\subset\bp^2\times\BBrc{q}$. Indeed, $X=\bp^2\times T^\circ$ is an open subset of $\bp^2\times T$, and the blow-up center $\fq$ is entirely contained in $X$. Since blow-up commutes with restriction to an open subset, the blow-up of $X$ at $\fq$ is the restriction of the blow-up of $\bp^2\times T$ at $\fq$; and the exceptional divisor stays the same after restriction. Since we only need to consider what happens on the exceptional divisor $E_\fq$ and find the induced map on $E_\fq$, we perform our calculations on $\bp^2\times T$ for simplicity.

Next, we note that the induced map can be broken down into the composition of maps
\begin{equation*}
\begin{tikzcd}
	E_\fq \rar[dashed,"\bpsi^{-1}"] & E_{q} \rar[dashed,"\bphi"] & E_\fq
\end{tikzcd}
\end{equation*}
induced by $\varphi$ and $\psi$, where $E_{q}$ is the exceptional divisor of the blow-up of $X$ at the point $(q,q)\in X$. In fact, by Proposition \ref{prop:linkblowups}, there is a birational map from $E_q$ to $E_\fb$ (the exceptional divisor over the flopping curve $\fb$) induced by the identity map.

Finally, to see the dynamical property of the induced map $\bphi\circ\bpsi^{-1}$, we will analyze the map $\Phi:=\bpsi^{-1}\circ\bphi:E_{q}\dashrightarrow E_{q}$, which is birationally conjugate to $\bphi\circ\bpsi^{-1}:E_\fq\dashrightarrow E_\fq$ by $\bphi$. As $E_{q}$ is isomorphic to $\bp^2$ and $E_\fq$ is isomorphic to $\fq\times\bp^1$, it is simpler to study the map $\bpsi^{-1}\circ\bphi$ than the map $\bphi\circ\bpsi^{-1}$.

The map $\vartheta=\varphi\circ\chi\circ\varphi\circ\psi$ induces a birational self-map on $E_\fq\subset\Bl_\fq{X}$ as well, and it can be broken down into the composition
\begin{center}
\begin{tikzcd}
	E_\fq \arrow[r,dashed, "\bpsi^{-1}"] & E_q \arrow[r,dashed,"\bphi"] & E_\fq \arrow[r,dashed,"\bchi^{-1}"] & E_q \arrow[r,dashed,"\bphi"] & E_\fq.
\end{tikzcd}
\end{center}
Again, we study the map $\Psi:=\bchi^{-1}\circ\bphi$ which is birationally conjugate to $\bphi\circ\bchi^{-1}$ by $\bphi$. Therefore, the map $\Psi\circ\Phi$ is birationally conjugate to $\bphi\circ\bchi^{-1}\circ\bphi\circ\bpsi^{-1}$ by $\bphi$.

\subsection{Explicit formulas for cubic involution}

In order to obtain simpler formulas, we now fix the base field $K=\overline{\bq}$, a specific cubic curve $T$ given by
\begin{equation*}
    y^2z=x^3+5x^2z+4xz^2
\end{equation*}
in $\bp^2$ and the point $q=[0,0,1]\in T$. This is the elliptic curve in Example \ref{exp:6.1}. One of the affine pieces of $T$ is depicted in Figure \ref{fig:ind(iq)}.

Recall that the cubic involution $\iota_q:\bp^2\dashrightarrow\bp^2$ associated with $T$ and $q$ is defined by the formula (\ref{eqn:cubicinv}). The involution $\iota_q$ contracts the four tangent lines $\fl_1,\dots,\fl_4$ through $q$ to the points $q_1,\dots,q_4$ respectively. The conic through $q,q_1,\dots,q_4$ is
\begin{equation*}
	\fq: y^2=5x^2+8xz,
\end{equation*}
and $\iota_q$ contracts $\fq$ to $q$.

Now let $\bp^2\times\bp^2$ be the product of two projective planes with coordinates $([x:y:z],[a:b:c])$, and let $\bp^2\times T$ be the threefold defined in $\bp^2\times\bp^2$ by the equation
\begin{equation*}
	b^2c=a^3+5a^2c+4ac^2.
\end{equation*}
We define the involution $\varphi:\bp^2\times T\dashrightarrow\bp^2\times T$ by
\begin{equation*}
	\Par*{[x:y:z],[a:b:c]}\mapsto\Par*{[5x^3-xy^2+8x^2z: 5x^2y-y^3+8xyz: -2x^3-5x^2z+y^2z],[a:b:c]}.
\end{equation*}
We also write down the defining equations for $\psi:\bp^2\times T\dashrightarrow\bp^2\times T$ explicitly:
\begin{equation*}
	\Par*{[x:y:z],[a:b:c]}\mapsto\Par*{[P_1(x,y,z;a,b,c):P_2(x,y,z;a,b,c):P_3(x,y,z;a,b,c)],[a:b:c]},
\end{equation*}
where
\begin{align*}
	P_1&=(x^3 + 2y^2z - 4xz^2)a+ (-2xyz)b+ (5x^3 - xy^2 + 8x^2z)c,\\
	P_2&=(3x^2y + 10xyz + 4yz^2)a+ (- 2x^3 - 10x^2z - 8xz^2)b+ (5x^2y - y^3 + 8xyz)c,\\
	P_3&=(3x^2z + 10xz^2 + 4z^3)a+ (- 2yz^2)b+ (- 2x^3 - 5x^2z + y^2z)c.	
\end{align*}
For the elliptic curve we have chosen, the inverse of the point $[a:b:c]$ has coordinates $[a:-b:c]$. Therefore, the involution $\chi:\bp^2\times T\dashrightarrow\bp^2\times T$ is defined by
\begin{equation*}
	\Par*{[x:y:z],[a:b:c]}\mapsto\Par*{[P_1(x,y,z;a,-b,c):P_2(x,y,z;a,-b,c):P_3(x,y,z;a,-b,c)],[a:b:c]}.
\end{equation*}

We use the map $\varphi$ to explain how to find induced maps.

\subsection{\texorpdfstring{Procedure for finding induced maps}{}}
\label{subsec:process}

It suffices to make local calculations for finding birational maps. Let $\ba^2\times\ba^2$ be the affine space with coordinates $((x,y),(a,b))$. One of the affine pieces of the threefold $\bp^2\times T$ is defined by the equation
\begin{equation*}
	b^2=a^3+5a^2+4a
\end{equation*}
in $\ba^2\times\ba^2$. We use $Y$ to denote this affine piece. The curve $\fq$ is defined by $5x^2-y^2+8x=a=b=0$ in $Y$, and the point $(q,q)$ is $((0,0),(0,0))\in Y$.

\subsubsection{\texorpdfstring{Blow-up of $Y$ at conic}{}}

We explicitly define one affine chart of the blow-up of $Y$ at $\fq$, denoted $\Bl_\fq{Y}$, in $\ba^2\times\ba^2\times\ba^2$, with coordinates $((u,v),(c,d),(r,s))$, by the equations
\begin{equation*}
	dr-(c^2+5c+4)s=(5u^2-v^2+8u)r-d=(5u^2-v^2+8u)s-c=0.
\end{equation*}
The projection map $\pi:\Bl_\fq Y\to Y$ is given by
\begin{equation*}
	((u,v),(c,d),(r,s))\mapsto((u,v),(c,d)),
\end{equation*}
and inverse of the projection maps $\pi^{-1}:Y\dashrightarrow\Bl_\fq Y$ is given by
\begin{equation*}
	((x,y),(a,b))\mapsto((x,y),(a,b),(b/(5x^2-y^2+8x),a/(5x^2-y^2+8x))).
\end{equation*}
The exceptional divisor over $\fq$ is denoted by $E_\fq$. We note in particular that (one of the affine charts of) $E_\fq$ is the subvariety
\begin{equation*}
	\BBrc{(u,v,c,d,r,s):v^2=5u^2+8u,c=d=0,s=0}\subset(\ba^2)^3.
\end{equation*}
So there is an isomorphism
\begin{equation*}
	E_\fq\cong\fq\times\ba^1=\BBrc{((u,v),r):v^2=5u^2+8u}\subset\ba^2\times\ba^1.
\end{equation*}

\subsubsection{\texorpdfstring{Blow-up of $Y$ at point}{}}

Let $\Bl_q {Y}$ be the blow-up of $Y$ at $(q,q)$. We want to represent the exceptional divisor $E_q$ over $(q,q)$ using curves, which can be considered as elements in $\bp(N_{(q,q)/Y})$. We take parametrized curves through the point $(q,q)=((0,0),(0,0))\in Y$ in different tangent directions. For example, for fixed $(z,w)\in\ba^2$, consider the parametrized curve $\gamma_{(z,w)}:T\to Y$, where $T:=\BBrc{(\alpha,\beta)\in\ba^2:\alpha^3+5\alpha^2+4\alpha}$, defined by
\begin{equation*}
	(\alpha,\beta)\mapsto((\beta z,\beta w),(\alpha,\beta)).
\end{equation*}
Then $\gamma_{(z,w)}$ has tangent vector $((z,w),(0,1))$ at $((0,0),(0,0))$. In this way, the curve $\gamma_{(z,w)}$ corresponds to a point $p_{(z,w)}\in E_q$. Different curves $\gamma_{(z,w)}, \gamma_{(z',w')}$ have different tangent vectors at $(q,q)$ and therefore correspond to different points $p_{(z,w)},p_{(z',w')}$ on $E_q$. Taking all different points $(z,w)$ of $\ba^2$, we obtain a family of parametrized curves $\Gamma:\ba^2\times T\to Y$ given by
\begin{equation*}
	((z,w),(\alpha,\beta))\mapsto((\beta z,\beta w),(\alpha,\beta)),
\end{equation*}
which correspond to a dense open subset of $E_q$. We identify the $\ba^2$ with (a dense open subset of) $E_q$, and now $\Gamma$ has parameter domain $E_q\times T$.

\subsubsection{Induced map}

The composition
\begin{equation*}
	E_q\times T\overset{\Gamma}{\longrightarrow}Y\overset{\varphi}{\longrightarrow}Y\overset{\pi^{-1}}{\longrightarrow}\Bl_\fq Y
\end{equation*}
is a family (parametrized by $E_q$) of parameterizations (all parameterized by $T$) of curves, each of them passing through a point on the exceptional divisor $E_\fq$.

Regard $E_q$ as $\ba^2$ with coordinates $(z,w)$, and $E_\fq$ as $\fq\times\ba^1$ with coordinates $((u,v),r)$. The correspondence between a pair $(z,w)$ and a point on $E_\fq$ gives the induced map $\bphi:E_q\dashrightarrow E_\fq$, which is in fact defined by
\begin{equation}
\label{eqn:affinephi}
	(z,w)\mapsto\Par*{\Par*{\frac{8z^2}{-5z^2+w^2},\frac{8zw}{-5z^2+w^2}},\frac{25z^4-10w^2z^2+w^4}{72z^5-80w^2z^3+8w^4z}}.
\end{equation}
Here we point out that $\bphi$ is a map into the ruled surface $E_\fq$. Indeed, the first two components satisfy the equation of the conic $v^2=5u^2+8u$. The inverse $\bphi^{-1}:E_\fq\to E_q$ is given by
\begin{equation*}
	\Par*{(u,v),r}\mapsto\Par*{\frac{25u^5 - 10u^3v^2 + v^4u}{8u(9u^4 - 10u^2v^2 + v^4)r},\frac{25u^4v - 10u^2v^3 + v^5}{8u(9u^4 - 10u^2v^2 + v^4)r}}.
\end{equation*}

\subsection{Projectivized induced maps}

The process in Section \ref{subsec:process} yields three induced maps $\bphi,\bpsi,\bchi:E_q\dashrightarrow E_\fq$ with inverses $\bphi^{-1},\bpsi^{-1},\bchi^{-1}:E_\fq\dashrightarrow E_q$. To analyze the dynamical property of these maps, we projectivize the exceptional divisors $E_\fq$ and $E_q$ and homogenize the maps. Now we have
\begin{equation*}
	E_q\cong\bp^2\quad\text{and}\quad E_\fq\cong\fq\times\bp^1.
\end{equation*}
Homogenize the equations in (\ref{eqn:affinephi}); the projectivized map $\overline{\varphi}:E_q\dashrightarrow E_\fq$ is given by
\begin{equation*}
	[x:y:z]\mapsto\Par*{[8x^2:8xy:(-5x^2 + y^2)],[(5x^2 - y^2)^2z:8x(x-y)(x+y)(3x-y)(3x+y)]}.
\end{equation*}
Since we will use the maps $\overline{\psi}^{-1}$ and $\overline{\chi}^{-1}$, we present their definitions here:
\begin{itemize}
	\item $\overline{\psi}^{-1}:E_\fq\dashrightarrow E_q$ is defined by
\begin{equation*}
	([a:b:c],[s,t])\mapsto
	\begin{bmatrix}
		(25a^5-10a^3b^2+ab^4)t+(80a^4b-16a^2b^3)s:\\
		(25a^4b-10a^2b^3+b^5)t+(72a^5-8ab^4)s:\\
		8a(a-b)(a+b)(3a-b)(3a+b)s
	\end{bmatrix}^T.
\end{equation*}
	\item $\overline{\chi}^{-1}:E_\fq\dashrightarrow E_q$ is defined by
\begin{equation*}
	([a:b:c],[s,t])\mapsto
	\begin{bmatrix}
		(25a^5-10a^3b^2+ab^4)t-(80a^4b-16a^2b^3)s:\\
		(25a^4b-10a^2b^3+b^5)t-(72a^5-8ab^4)s:\\
		8a(a-b)(a+b)(3a-b)(3a+b)s
	\end{bmatrix}^T.
\end{equation*}
\end{itemize}
The remaining maps can be found in a separate SageMath script.

\section{Dynamics of Induced Maps}
\label{sec:5}

In the following discussion, if no confusion arises, we always use the same notation for a point and the corresponding point on any blow-up; and we always use the same notation for a subvariety and all of its strict transforms. Moreover, by the observation (\ref{eqn:degreeff-1}), we use the pushforward $f_*$, rather than $f^*$, to analyze the degrees and dynamical degrees in what follows.

\subsection{\texorpdfstring{The map induced by $\varphi\circ\psi$}{}}

Recall that the map conjugate to the map induced by $\varphi\circ\psi$ is $\Phi:=\bpsi^{-1}\circ\bphi:E_q\cong\bp^2\dashrightarrow\bp^2\cong E_q$, and $\Phi$ is defined by
\begin{align*}
	[x:y:z]\mapsto
	\begin{bmatrix}
		9x^5 - 10x^3y^2 + xy^4 + 10x^3yz - 2xy^3z:\\
		9x^4y - 10x^2y^3 + y^5 + 9x^4z - y^4z:\\
		9x^4z - 10x^2y^2z + y^4z
	\end{bmatrix}^T.
\end{align*}
The inverse map $\Phi^{-1}:=\bphi^{-1}\circ\bpsi:E_q\dashrightarrow E_q$ is given by
\begin{equation*}
	[x:y:z]\mapsto
	\begin{bmatrix}
		x(9x^4 - 10x^2y^2 + y^4 + 10x^2yz - 2y^3z + 2yz^3 - z^4):\\
		(y - z)(9x^4 - 10x^2y^2 + y^4 + 10x^2yz - 2y^3z + 2yz^3 - z^4):\\
		z(x - y + z)(x + y - z)(3x - y + z)(3x + y - z)
	\end{bmatrix}^T.
\end{equation*}
We will show in Proposition \ref{prop:actionofPhi} that the sequence $\BBrc{\deg(\Phi^n)}$ grows quadratically.

A direct computation shows that the indeterminacy points of $\Phi$ are
\begin{equation*}
	p:=[0:0:1], p_1:=[1:1:0], p_2:=[1:-1:0], p_3:=[1:3:0], p_3:=[1:-3:0].
\end{equation*}
The indeterminacy points of $\Phi^{-1}$ are
\begin{equation*}
	p':=[0:1:1], p_1:=[1:1:0], p_2:=[1:-1:0], p_3:=[1:3:0], p_4:=[1:-3:0].
\end{equation*}
We write $P:=\BBrc{p,p_1,p_2,p_3,p_4,p'}$.

Let $\Bl_P E_q$ be the blow-up of $E_q$ at $P$ with exceptional divisors $e_p$, $e_{p_1}$, $e_{p_2}$, $e_{p_3}$, $e_{p_4}$, $e_{p'}$. Both the lift of $\Phi$ and the lift of $\Phi^{-1}$ on $\Bl_P E_q$ will still have four indeterminacy points $p_1',p_2',p_3',p_4'$, one on each of the exceptional divisors $e_{p_1},e_{p_2},e_{p_3},e_{p_4}$. We write $P':=\BBrc{p_1',p_2',p_3',p_4'}$. In fact, the pullback of a general line in $E_q\cong\bp^2$ under $\Phi$ passes through the points $p_1,p_2,p_3,p_4$ with respective tangent lines
\begin{align*}
	l_1:2x-2y+z=0,\quad l_2:2x+2y-z=0,\quad l_3:6x-2y+z=0,\quad l_4:6x+2y-z=0.
\end{align*}
The same claim holds for $\Phi^{-1}$.

The map $\Phi$ preserves a fibration on $E_q\cong\bp^2$ of singular quartic curves with two nodal points at $p,p'$ and with tangent lines $l_1,\dots,l_4$ at $p_1,\dots,p_4$, respectively. Now we blow-up $\Bl_P E_q$ at $P'$ to get $\Bl_{P'}\Bl_P E_q$ with exceptional divisors $e_{p_1'},e_{p_2'},e_{p_3'},e_{p_4'}$ at the corresponding points. Both $\Phi$ and $\Phi^{-1}$ lift to an automorphism on $\Bl_{P'}\Bl_P E_q$.

\subsection{\texorpdfstring{Quadratic degree growth of $\Phi$}{}}

Given the sequence of blow-ups
\begin{equation*}
	\Bl_{P'}\Bl_P E_q\to\Bl_P E_q\to E_q,
\end{equation*}
we write $\hat{\Phi}$ for the lift of $\Phi$ on $\Bl_P E_q$, and write $\tPhi$ for the lift of $\Phi$ on $\Bl_{P'}\Bl_P E_q$. To see the degree growth of $\Phi$, we examine the action of $\tPhi$ on $N^1\Par*{\Bl_{P'}\Bl_P E_q}$. Let $h$ be a hyperplane class. A set of (ordered) basis elements for $N^1(\Bl_{P'}\Bl_P E_q)$ is
\begin{equation*}
	\BBrc*{h,e_p,e_{p_1},e_{p_2},e_{p_3},e_{p_4},e_{p'},e_{p_1'},e_{p_2'},e_{p_3'},e_{p_4'}}.
\end{equation*}

\begin{prop}
\label{prop:actionofPhi}
The map $\tPhi_*:N^1\Par*{\Bl_{P'}\Bl_P E_q}\to N^1\Par*{\Bl_{P'}\Bl_P E_q}$ is given by
\begin{equation}
\label{eqn:actionmatrix}
M:=
\scriptsize{
\begin{bmatrix}
5 & 4 & 0 & 0 & 0 & 0 & 0 & 1 & 1 & 1 & 1 \\
0 & 0 & 0 & 0 & 0 & 0 & 1 & 0 & 0 & 0 & 0 \\
-1 & -1 & 1 & 0 & 0 & 0 & 0 & -1 & 0 & 0 & 0 \\
-1 & -1 & 0 & 1 & 0 & 0 & 0 & 0 & -1 & 0 & 0 \\
-1 & -1 & 0 & 0 & 1 & 0 & 0 & 0 & 0 & -1 & 0 \\
-1 & -1 & 0 & 0 & 0 & 1 & 0 & 0 & 0 & 0 & -1 \\
-4 & -3 & 0 & 0 & 0 & 0 & 0 & -1 & -1 & -1 & -1 \\
-2 & -2 & 0 & 0 & 0 & 0 & 0 & -1 & 0 & 0 & 0 \\
-2 & -2 & 0 & 0 & 0 & 0 & 0 & 0 & -1 & 0 & 0 \\
-2 & -2 & 0 & 0 & 0 & 0 & 0 & 0 & 0 & -1 & 0 \\
-2 & -2 & 0 & 0 & 0 & 0 & 0 & 0 & 0 & 0 & -1
\end{bmatrix}
}.
\end{equation}
\end{prop}

\begin{proof}
Let $h$ be a general hyperplane on $\bp^2$ define by $ax+by+cz=0$. The image of $h$ under $\Phi$ is defined by $a\Phi^{-1}_1(x,y,z)+b\Phi^{-1}_2(x,y,z)+c\Phi^{-1}_3(x,y,z)=0$, where $\Phi^{-1}_i$'s are the defining polynomials of $\Phi^{-1}$. This is a curve of degree $5$ that passes through the point $p'=[0:1:1]$ with multiplicity $4$ and the points $p_1,p_2,p_3,p_4$ with tangent lines $l_1,l_2,l_3,l_4$, respectively. On $\Bl_P E_q$, we have
\begin{equation*}
	\hat{\Phi}_*(h)=5h-4e_{p'}-\sum e_{p_i}.
\end{equation*}
On $\Bl_{P'}\Bl_P E_q$, we have
\begin{align*}
	\tPhi_*(h)&=5h-4e_{p'}-\sum(e_{p_i}+e_{p_i'})-\sum e_{p_i'}\\
	&=5h-4e_{p'}-\sum e_{p_i}-2\sum e_{p_i'}
\end{align*}

By computing the Jacobian of the inverse map $\Phi^{-1}$, we find that $\Phi^{-1}$ contracts a degree-$4$ curve to $p=[0:0:1]$. Moreover, this curve passes through the point $p'=[0:1:1]$ with multiplicity $3$ and through $p_1,p_2,p_3,p_4$ with tangent lines $l_1,l_2,l_3,l_4$, respectively. We have
\begin{equation*}
	\hat{\Phi}_*(e_p)=4h-3e_{p'}-\sum e_{p_i},
\end{equation*}
and so
\begin{align*}
	\tPhi_*(e_p)&=4h-3e_{p'}-\sum(e_{p_i}+e_{p_i'})-\sum e_{p_i'}\\
	&=4h-3e_{p'}-\sum e_{p_i}-2\sum e_{p_i'}.
\end{align*}

We note that $\Phi(0:1:1)=[0:0:1]$, and it can be verified that
\begin{equation*}
	\tPhi_*(e_{p'})=e_p.
\end{equation*}

Next, we observe that the image of a general line passing through $p_1=[1:1:0]$ is a degree-$4$ curve of passing through the point $p'=[0:1:1]$ with multiplicity $3$. The curve also passes through the point $p_1$ with tangent line \underline{not} equal to $l_1:2x-2y+z=0$ and through the points $p_2,p_3,p_4$ with tangent lines $l_2,l_3,l_4$, respectively. Thus,
\begin{equation*}
	\hat\Phi_*(h-e_{p_1})=4h-3e_{p'}-\sum e_{p_i},
\end{equation*}
and hence
\begin{equation}
\label{eqn:H-p1-p1'}
	\tPhi_*(h-e_{p_1}-e_{p_1'})
	=4h-3e_{p'}-\sum e_{p_i}-2\sum e_{p_i'}+e_{p_1'}.
\end{equation}

The image of the line $l_1:2x-2y+z=0$ is a curve of degree $3$ that passes through $p'=[0:1:1]$ with multiplicity $2$ and $p_2,p_3,p_4$ with tangent lines $l_2,l_3,l_4$, respectively. So
\begin{align}
\label{eqn:H-p1-2p1'}
\begin{split}
	\tPhi_*(h-e_{p_1}-2e_{p_1'})=3h-2e_{p'}-\sum e_{p_i}-2\sum e_{p_i'}+e_{p_1}+2e_{p_1'}.
\end{split}
\end{align}
Combining (\ref{eqn:H-p1-p1'}) and (\ref{eqn:H-p1-2p1'}) we get
\begin{equation}
\label{eqn:f(p1')}
	\tPhi_*(e_{p_1'})=h-e_{p'}-e_{p_1}-e_{p_1'}.
\end{equation}
Similarly,
\begin{equation*}
	\tPhi_*(e_{p_i'})=h-e_{p'}-e_{p_i}-e_{p_i'},
\end{equation*}
for $i=2,3,4$.

Finally, substituting (\ref{eqn:f(p1')}) back into (\ref{eqn:H-p1-p1'}) and rearranging terms, we get
\begin{align*}
	\tPhi_*(e_{p_1})&=\tPhi_*(h)-\tPhi_*(e_p)-\tPhi_*(e_{p_1'})-e_{p_1'}\\
	&=h-e_{p'}-e_{p_1'}-(h-e_{p'}-e_{p_1}-e_{p_1'})\\
	&=e_{p_1}.
\end{align*}
Similarly, we calculate that
\begin{equation*}
	\tPhi_*(e_{p_i})=e_{p_i},
\end{equation*}
for $i=2,3,4$.
\end{proof}

\begin{cor}
The eigenvalues of $M$ have absolute value $1$, and the largest Jordan block of $M$ is $\scriptsize{\begin{bmatrix}1&1&0\\0&1&1\\0&0&1\end{bmatrix}}$. Hence the sequence $\BBrc{\deg(\Phi^n)}_n$ grows quadratically.
\end{cor}

Thus the conjugate map $E_\fq\dashrightarrow E_\fq$ induced by $\varphi\circ\psi$ on the exceptional divisor over the curve $\fq$ in the fiber over $q$ has quadratic degree growth. Recall that the map $(\varphi\circ\psi)_t$ on a general fiber has quadratic degree growth, whereas the map $(\varphi\circ\psi)_q$ is the identity map. In this way, we obtain interesting dynamics on the exceptional divisor $E_\fq$.

Recall that the map $\vartheta_t$ on a general fiber has dynamical degree $17+12\sqrt{2}$. We expect the induced self-map of $E_\fq$ by $\vartheta$ to have $\lambda>1$ as well.

\subsection{\texorpdfstring{The map induced by $\vartheta$}{}}

Using the maps $\bchi$ and $\bphi$ (and their inverses), we obtain another map $\Psi:=\bchi^{-1}\circ\bphi:E_q\dashrightarrow E_q$ with the inverse map $\Psi^{-1}:=\bphi^{-1}\circ\bchi:E_q\dashrightarrow E_q$. The map $\Psi\circ\Phi$ is conjugate to the map induced by $\vartheta=\varphi\circ\chi\circ\varphi\circ\psi$. We will show that $\lambda(\Psi\circ\Phi)=16$, but first we need to understand $\Psi$.

The map $\Psi$ behaves almost the same as $\Phi$. The indeterminacy points of $\Psi$ are
\begin{equation*}
	p=[0:0:1], p_1=[1:1:0], p_2=[1:-1:0], p_3=[1:3:0], p_4=[1:-3:0].
\end{equation*}
Moreover, the indeterminacy points of $\Psi^{-1}$ are
\begin{equation*}
	p'':=[0:-1:1], p_1=[1:1:0], p_2=[1:-1:0], p_3=[1:3:0], p_4=[1:-3:0].
\end{equation*}
The pullback of a general line on $E_q$ under both $\Psi$ passes through the points $p_1,p_2,p_3,p_4$ with respective tangent lines
\begin{align*}
	2x-2y-z=0,\quad 2x+2y+z=0,\quad 6x-2y-z=0,\quad 6x+2y+z=0.
\end{align*}
The same claim holds for $\Psi^{-1}$. Therefore, if we blow-up $E_q$ at $Q:=\BBrc{p,p'',p_1,p_2,p_3,p_4}$, then both the lifts $\hat{\Psi}$ and $\hat{\Psi}^{-1}$ on $\Bl_QE_q$ will have an indeterminacy point $p_i''$ at the exceptional divisors $e_{p_i}$ for each $i=1,\dots,4$. If we blow-up again at $Q':=\BBrc{p_1'',p_2'',p_3'',p_4''}$, then $\hat{\Psi}$ and $\hat{\Psi}^{-1}$ will be regularized on $\Bl_{Q'}\Bl_QE_q$ and lift to $\tPsi$ and $\tPsi^{-1}$. The linear map $\tPsi_*:N^1(\Bl_{Q'}\Bl_QE_q)\to N^1(\Bl_{Q'}\Bl_QE_q)$ can be represented by the same matrix (\ref{eqn:actionmatrix}) with respect to a different basis, of course.

Now we are ready to study the dynamical degree of $\Psi\circ\Phi$. Let us blow-up $E_q$ at
\begin{equation*}
	R:=\BBrc{p,p',p'',p_1,p_2,p_3,p_4}
\end{equation*}
to get $\Bl_R\bp^2$ and then at
\begin{equation*}
	R':=\BBrc{p_1',p_2',p_3',p_4',p_1'',p_2'',p_3'',p_4''}
\end{equation*}
to get $\Bl_{R'}\Bl_RE_q$. Then $N^1\Par*{\Bl_{R'}\Bl_RE_q}$ has rank $16$ with an ordered basis
\begin{equation*}
	\BBrc*{h,e_p,e_{p'},e_{p''},e_{p_1},e_{p_2},e_{p_3},e_{p_4},e_{p_1'},e_{p_2'},e_{p_3'},e_{p_4'},e_{p_1''},e_{p_2''},e_{p_3''},e_{p_4''}}.
\end{equation*}
Let $\wtPhi$ and $\wtPsi$ denote the lifts of $\tPhi$ and $\tPsi$ on $\Bl_{R'}\Bl_RE_q$. The maps $\tPhi$ (resp. $\tPsi$) is an automorphism (which contracts no curves) on $\Bl_{P'}\Bl_PE_q$ (resp. $\Bl_{Q'}\Bl_QE_q$). Nevertheless, some curves on $\Bl_{R'}\Bl_RE_q$ will be contracted by $\wtPhi$ and $\wtPsi$. The map $\wtPhi$ contracts the curves $e_{p''},e_{p_1''},e_{p_2''},e_{p_3''},e_{p_4''}$, and the map $\wtPsi$ contracts the curves $e_{p'},e_{p_1'},e_{p_2'},e_{p_3'},e_{p_4'}$.

\begin{prop}
\label{prop:Mgf=MgMf}
For any curve $C$ contracted by $\wtPhi$, we have $\wtPhi(C)\not\subset\Ind(\wtPsi)$. We conclude that
\begin{equation*}
	(\wtPsi\circ\wtPhi)_*=\wtPsi_*\wtPhi_*.
\end{equation*}
\end{prop}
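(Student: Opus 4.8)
The assertion combines a geometric fact — that no $\wtPhi$-contracted curve meets $\Ind(\wtPsi)$ — with its algebraic consequence for pushforwards, and I will dispose of the algebraic implication first, since it is formal. By the functoriality criterion of Diller–Favre \cite{Diller2001DynamicsOB}, for birational surface maps $f,g$ one has $(f\circ g)^{*}=g^{*}\circ f^{*}$ on $N^{1}$ whenever $g$ contracts no curve onto a point of $\Ind(f)$. Taking $f=\wtPsi$ and $g=\wtPhi$, the hypothesis $\wtPhi(C)\notin\Ind(\wtPsi)$ for every $\wtPhi$-contracted curve $C$ yields $(\wtPsi\circ\wtPhi)^{*}=\wtPhi^{*}\circ\wtPsi^{*}$. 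Since $f_{*}$ and $f^{*}$ are adjoint for the intersection pairing, transposing this identity gives $(\wtPsi\circ\wtPhi)_{*}=\wtPsi_{*}\circ\wtPhi_{*}$, the stated conclusion. Hence the whole proposition reduces to the non-containment.

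To establish it I will identify both sides explicitly. The curves contracted by $\wtPhi$ are exactly $e_{p''},e_{p_1''},\dots,e_{p_4''}$, the exceptional divisors over the centers ($p''$ and the infinitely near $p_1'',\dots,p_4''$) that were adjoined to regularize $\Psi$ and $\Psi^{-1}$ but are irrelevant to $\Phi$. As none of these centers lies in $\Ind(\Phi)=\{p,p_1,\dots,p_4\}$, the lift $\wtPhi$ contracts each $e_{x}$ to the single point $\tPhi(x)$. For $e_{p''}$ this image is the honest point $\Phi(p'')=\Phi([0,-1,1])=[0,-2,1]$ of $E_q$, by direct evaluation of $\Phi$. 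For $e_{p_i''}$ I will use that $\tPhi$ preserves the class $e_{p_i}$ (Proposition \ref{prop:actionofPhi}), hence preserves the curve $e_{p_i}$ and restricts to an automorphism of this $\bp^{1}$; thus $\wtPhi(e_{p_i''})=\tPhi(p_i'')$ is a tangent direction at $p_i$ lying on $e_{p_i}$.

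I will then locate $\Ind(\wtPsi)$. Since $\Psi$ is already regular on $\Bl_{Q'}\Bl_{Q}E_q$ and $\Bl_{R'}\Bl_{R}E_q$ arises from it by the extra blow-ups at $p'$ and the infinitely near $p_1',\dots,p_4'$, the map $\wtPsi$ is undefined only over the $\tPsi$-preimages of these extra centers; that is, $\Ind(\wtPsi)$ is supported over $\Psi^{-1}(p')=\tPsi^{-1}(p')$, an honest point of $E_q$, together with the directions $\tPsi^{-1}(p_i')$ on the curves $e_{p_i}$ (again using that $\tPsi$ preserves each $e_{p_i}$). Comparing with the images above, the disjointness splits into two checks: that the honest point $[0,-2,1]$ differs from $\Psi^{-1}(p')$, equivalently $\Psi([0,-2,1])\neq[0,1,1]$, which is a single evaluation; and that for each $i$ the direction $\tPhi(p_i'')$ differs from $\tPsi^{-1}(p_i')$ on $e_{p_i}$.

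The infinitely near comparison on the $e_{p_i}$ is the one genuinely delicate step, and I expect it to be the main obstacle. Here both $\tPhi$ and $\tPsi$ act on the exceptional $\bp^{1}=e_{p_i}$ by Möbius transformations, and I must show that $\tPhi$ sends the $\Psi$-tangent direction $p_i''$ to a direction distinct from the $\tPsi^{-1}$-image of the $\Phi$-tangent direction $p_i'$. I will resolve this by a local coordinate computation on $\Bl_{R'}\Bl_{R}E_q$ centered at $p_i$, exploiting that the four $\Phi$-tangent lines $l_1,\dots,l_4$ and the four $\Psi$-tangent lines recorded above are pairwise distinct, so that the marked directions $p_i',p_i''$ stay separated after applying the respective automorphisms. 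With both checks in place, no $\wtPhi$-contracted curve meets $\Ind(\wtPsi)$, and the pushforward factorization follows as in the first paragraph.
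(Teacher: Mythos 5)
Your reduction is sound and in places sharper than the paper's own write-up: the Diller--Favre functoriality criterion plus adjointness correctly converts the geometric claim into the pushforward identity; the contracted curves are correctly identified as $e_{p''},e_{p_1''},\dots,e_{p_4''}$; and your localization of $\Ind(\wtPsi)$ over $\tPsi^{-1}(p')$ and the directions $\tPsi^{-1}(p_i')\in e_{p_i}$ (using that $\tPsi$ is already an automorphism of $\Bl_{Q'}\Bl_{Q}E_q$ and that $\Bl_{R'}\Bl_{R}E_q$ differs from it only by the extra centers $p',p_1',\dots,p_4'$) is correct, and is exactly the right way to organize the disjointness. The two checks you isolate are the right ones.

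The gap is that you carry out neither check, and for the infinitely near one --- which you yourself flag as the main obstacle --- the justification you substitute is not an argument. Distinctness of the $\Phi$-tangent lines $l_i$ from the $\Psi$-tangent lines only gives $p_i'\neq p_i''$ on $e_{p_i}$; it implies nothing about whether $\tPhi(p_i'')$ collides with $\tPsi^{-1}(p_i')$, since these are images of two \emph{different} marked points under two \emph{different} M\"obius transformations, and whether they coincide is a quantitative question. The paper's proof consists precisely of this computation: in the coordinates $(x-y)s=zt$ on $\Bl_{p_1}E_q$ one has $p_1'=[-2,1]$, $p_1''=[2,1]$, and the restrictions $\Phi_1\vert_{e_{p_1}}:[s,t]\mapsto[2s,2t-s]$ and $\Psi_1\vert_{e_{p_1}}:[s,t]\mapsto[2s,2t+s]$, whence $\tPhi(p_1'')=[1,0]$ while $\tPsi^{-1}(p_1')=[-1,1]$; only this evaluation closes the proof. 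The same computation shows how unreliable the ``separation'' heuristic is: $\Psi_1$ sends $[1,0]$ exactly to $p_1''$, and $\Psi(\Phi(p''))=p''$, so the $\wtPhi$-images of the contracted curves \emph{do} land over blow-up centers of the $\Psi$-tower --- they are harmless only because those centers ($p''$ and the $p_i''$) are common to both models, hence already resolved for $\wtPsi$. Likewise your honest-point check needs the single evaluation $\Psi([0,-2,1])=[0,-1,1]\neq[0,1,1]$, which you assert is easy but do not perform. Until these evaluations are done, the non-containment --- and with it the proposition --- is not proved.
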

\begin{proof}
Since $\Phi(p'')=\Phi(0:-1:1)=[0:-2:1]=:p_{-2}$, $\widetilde{\Phi}(e_{p''})=p_{-2}$ on $\Bl_{R'}\Bl_RE_q$. Moreover, $\wtPsi$ is defined at $p_{-2}$. In fact, the point $p_{-2}$ lies on a curve which is mapped isomorphically to $e_{p''}$ by $\wtPsi$.

What happens at $e_{p_i''}$, for $i=1,\dots,4$, is similar, and we present only the local computations around $p_1$. The blow-up $\Bl_{p_1}E_q$ of $E_q$ at $p_1=[1:1:0]$ can be defined in $E_q\times\bp^1\cong\bp^2\times\bp^1$ (with coordinates $([x:y:z],[s:t])$) by the equation
\begin{equation*}
	(x-y)s=zt
\end{equation*}
with the exceptional divisor
\begin{equation*}
	e_{p_1}=\BBrc*{([1:1:0],[s:t])}\subset\Bl_{p_1}E_q.
\end{equation*}
In terms of the coordinates on $e_{p_1}$, $p''_1=([1:1:0],[2:1])$. Let $\Phi_1$ and $\Psi_1$ be the lifts of $\Phi$ and $\Psi$ on $\Bl_{p_1}E_q$. The restriction $\Phi_1\vert_{e_{p_1}}:e_{p_1}\to e_{p_1}$ is given by
\begin{equation*}
	([1:1:0],[s:t])\mapsto([1:1:0],[2s:2t-s]).
\end{equation*}
So $\Phi_1([1:1:0],[2:1])=([1:1:0],[1:0])$. This implies that $\wtPhi$ contracts $e_{p_1''}$ to the point corresponding to $([1:1:0],[1:0])$. Moreover, $\wtPsi$ is defined at the point corresponding to $([1:1:0],[1:0])$, and in fact, the point corresponding to $([1:1:0],[1:0])$ lies on a curve which is mapped isomorphically to $e_{p_1''}$ by $\wtPsi$.

By Theorem \ref{thm:AS}, we have $(\wtPsi\circ\wtPhi)_*=\wtPsi_*\wtPhi_*$.
\end{proof}

Consequently, $\wtPsi\circ\wtPhi$ contracts the curves
\begin{equation*}
	e_{p''},e_{p_1''},e_{p_2''},e_{p_3''},e_{p_4''},(\wtPhi^{-1})(e_{p'}),(\wtPhi^{-1})(e_{p_1'}),(\wtPhi^{-1})(e_{p_2'}),(\wtPhi^{-1})(e_{p_3'}),(\wtPhi^{-1})(e_{p_4'}).
\end{equation*}
on $\Bl_{R'}\Bl_RE_q$, where $(\wtPhi^{-1})(C)$ means the proper transform of a curve $C$ under the inverse of $\wtPhi$.

\subsection{\texorpdfstring{Algebraic stability of $\wtPsi\circ\wtPhi$}{}}

We prove the algebraic stability of $\wtPsi\circ\wtPhi$ on $\Bl_{R'}\Bl_RE_q$.

\begin{prop}
The map $\wtPsi\circ\wtPhi$ is algebraically stable on $\Bl_{R'}\Bl_RE_q$.
\end{prop}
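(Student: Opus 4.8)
The plan is to verify the Diller--Favre criterion of Theorem \ref{thm:AS}: since $\wtPsi\circ\wtPhi$ is a birational self-map of the projective surface $\Bl_{R'}\Bl_RE_q$, it is algebraically stable as soon as no curve contracted by $\wtPsi\circ\wtPhi$ has a forward orbit meeting $\Ind(\wtPsi\circ\wtPhi)$. A curve that is \emph{not} contracted remains a curve under every iterate and so can never be absorbed into the finite indeterminacy set; hence only the ten contracted curves identified above, namely $e_{p''},e_{p_1''},\dots,e_{p_4''}$ together with $\wtPhi^{-1}(e_{p'}),\wtPhi^{-1}(e_{p_1'}),\dots,\wtPhi^{-1}(e_{p_4'})$, need to be examined. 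For each of these I would compute its image point under $\wtPsi\circ\wtPhi$ and then control the orbit of that point.

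The computation is organized around invariant rational curves on which the dynamics trivializes. Evaluating the explicit formula for $\Phi$ at $x=0$ gives $[0,y,z]\mapsto[0,y-z,z]$, so the line $\{x=0\}\subset E_q\cong\bp^2$ is invariant, and in the affine coordinate $w=y/z$ the map $\Phi$ is the translation $w\mapsto w-1$; the analogous computation for $\Psi$ gives $w\mapsto w+1$. Likewise each exceptional curve $e_{p_i}$ is invariant, its centre $p_i$ being fixed by both maps, and in the fibre coordinate of the proof of Proposition \ref{prop:Mgf=MgMf} the restriction $\Phi|_{e_{p_1}}\colon[s,t]\mapsto[2s,2t-s]$ is the translation $\tau\mapsto\tau-\tfrac12$ with $\tau=t/s$, while $\Psi|_{e_{p_1}}$ is $\tau\mapsto\tau+\tfrac12$, and similarly for $i=2,3,4$. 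Consequently $\wtPsi\circ\wtPhi$ restricts to the identity on each of the five curves $\{x=0\},e_{p_1},\dots,e_{p_4}$, the two opposite translations cancelling.

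With this in hand the central claim is that every contracted curve is sent by $\wtPsi\circ\wtPhi$ to a \emph{fixed} point. For the first group $\wtPhi$ collapses the curve to a single point $b$, after which $\wtPsi$ sends $b$ to a point $a$. In the case of $e_{p''}$ one has $b=\Phi(p'')=[0,-2,1]\in\{x=0\}$ and $a=\Psi(b)$ returns to $p''$, i.e.\ $a\in e_{p''}$; since $\wtPhi$ collapses $e_{p''}$ to $b$, we get $(\wtPsi\circ\wtPhi)(a)=\wtPsi(b)=a$, so $a$ is fixed. For $e_{p_i''}$ the image point $a=\Psi(\Phi(p_i''))$ lies on the invariant curve $e_{p_i}$, where $\wtPsi\circ\wtPhi$ is the identity, so $a$ is again fixed (provided $a\neq p_i',p_i''$). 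For the second group, $\wtPhi^{-1}(e_{p'})$ is carried by $\wtPhi$ onto $e_{p'}$, which $\wtPsi$ contracts to $\Psi(p')=[0,2,1]\in\{x=0\}$, a fixed point; and $\wtPhi^{-1}(e_{p_i'})$ maps to a point of the invariant curve $e_{p_i}$, hence fixed. Thus the forward orbit of each of the ten image points is a single point, and algebraic stability reduces to checking that none of these fixed points lies in $\Ind(\wtPsi\circ\wtPhi)\subseteq\Ind(\wtPhi)\cup\wtPhi^{-1}(\Ind(\wtPsi))$.

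The main obstacle, and the only genuinely computational part, is this last finite verification: one must pin down the exact location of each image point, in particular whether it sits on the base $E_q$ or on one of the divisors $e_p,e_{p'},e_{p''},e_{p_i},e_{p_i'},e_{p_i''}$, and confirm that it differs from every indeterminacy point of the composition, equivalently from $p,p',p'',p_1,\dots,p_4$ and the infinitely near points $p_i',p_i''$. For the points on $\{x=0\}$ this is immediate from the value of $w$ (for instance $[0,2,1]$ has $w=2$, distinct from the values $w=0,1,-1$ attached to $p,p',p''$), and for the points on $e_{p_i}$ one checks in the fibre coordinate that they avoid $p_i'$ and $p_i''$. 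Once these disjointness checks are completed, Theorem \ref{thm:AS} yields that $\wtPsi\circ\wtPhi$ is algebraically stable on $\Bl_{R'}\Bl_RE_q$.
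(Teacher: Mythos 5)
Your overall strategy is the same as the paper's: reduce via Theorem \ref{thm:AS} to the ten contracted curves, and show that each one is collapsed to a fixed point, using the translation structure (so that $\wtPsi\circ\wtPhi$ restricts to the identity on $\{x=0\}$ and on each $e_{p_i}$) together with the collapse argument. Your handling of $e_{p''}$ is correct and is essentially the paper's argument; your conclusion for $\wtPhi^{-1}(e_{p'})$ and $\wtPhi^{-1}(e_{p_i'})$ (image a point of $\{x=0\}$, resp.\ of $e_{p_i}$, since $\Psi_1(p_1')=[-4,0]=[1,0]\in e_{p_1}$) is also what the computation gives.

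The genuine gap is the case of $e_{p_i''}$, and it sits exactly at your own proviso. Because $\Psi_1\circ\Phi_1$ is the identity on $e_{p_i}$, the coarse image point is $a=\Psi(\Phi(p_i''))=p_i''$ on the nose; that is, $a$ is precisely one of the two points you demanded it avoid, so as written the argument proves nothing for these four curves. The underlying problem is the sentence identifying $\Ind(\wtPsi\circ\wtPhi)$ with the list $p,p',p'',p_1,\dots,p_4,p_i',p_i''$: those are blow-up \emph{centers}, not points of $\Bl_{R'}\Bl_RE_q$, and landing ``at'' one of them only means landing on the exceptional divisor above it, which is harmless. (The actual indeterminacy points of $\wtPhi$ are the points whose coarse image lies in a center of the further blow-up, e.g.\ the point of $e_{p_1}$ with $[s,t]=[1,1]$, since $\Phi_1([1,1])=[2,1]=p_1''$; similarly for $\wtPsi$ with $p',p_i'$.) The repair is available inside your own toolkit: apply to $e_{p_i''}$ the same collapse argument you used for $e_{p''}$. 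Explicitly, $\wtPhi$ collapses $e_{p_1''}$ to the point $b_1\in e_{p_1}$ over $[s,t]=[1,0]$ (as $\Phi_1([2,1])=[4,0]$), then $\wtPsi(b_1)$ is a point $a_1\in e_{p_1''}$ (as $\Psi_1([1,0])=[2,1]=p_1''$), and hence $(\wtPsi\circ\wtPhi)(a_1)=\wtPsi(\wtPhi(a_1))=\wtPsi(b_1)=a_1$, a fixed point; this is how the paper argues, showing $(\wtPsi\circ\wtPhi)(e_{p_i''})$ is a single point of $e_{p_i''}$. The same care is needed where you invoke ``lies on an invariant curve, hence fixed'' for the remaining cases: e.g.\ at $b_1$ the coarse map sends $[1,0]$ to $\Phi_1([1,0])=[2,-1]=p_1'$, again a center, so the fixedness of $b_1$ is not a direct consequence of the identity on $e_{p_1}$ but follows from $\wtPhi(b_1)\in e_{p_1'}$ and $\wtPsi$ collapsing $e_{p_1'}$ back to $b_1$. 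With these corrections your proof goes through.
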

\begin{proof}

By Theorem \ref{thm:AS}, it suffices to show that for any curve $C$ contracted by $\wtPsi\circ\wtPhi$, $(\wtPsi\circ\wtPhi)(C)$ is a fixed point of $\wtPsi\circ\wtPhi$.

We first deal with the curves $e_{p''},e_{p_1''},e_{p_2''},e_{p_3''},e_{p_4''}$.

For $e_{p''}$, we note that $\wtPhi$ is defined on all of $e_{p''}$ and $\wtPhi$ contracts $e_{p''}$ to the point $p_{-2}$ (corresponding to $[0:-2:1]$). Moreover, $\wtPsi$ is defined at $p_{-2}$, and it lies on a curve which is mapped isomorphically to $e_{p''}$ by $\wtPsi$. Therefore, $\wtPsi\circ\wtPhi$ is defined everywhere on $e_{p''}$ and maps $e_{p''}$ to a point on itself. In particular, $\wtPsi\circ\wtPhi$ is defined at $(\wtPsi\circ\wtPhi)(e_{p''})$ and $\wtPsi\circ\wtPhi$ fixes this point.

To see the points $p_i''$, we need to blow-up $\bp^2$ at $p_i$ for $i=1,\dots,4$. Again, we show the local calculations around $p_1$. Recall that in the proof of Proposition \ref{prop:Mgf=MgMf}, we have $p_1''=([1:1:0],[2:1])$ in terms of the coordinates on $e_{p_1}$ and the restriction $\Phi_1\vert_{e_{p_1}}:e_{p_1}\to e_{p_1}$ is given by
\begin{equation*}
	([1:1:0],[s:t])\mapsto([1:1:0],[2s:2t-s]).
\end{equation*}
Let $s_1$ be the point corresponding to $([1:1:0],[1:0])$. Then $\wtPhi$ is defined everywhere on $e_{p_1''}$ and contracts $e_{p_1''}$ to $s_1$. The map $\wtPsi$ is defined at $s_1$, and $s_1$ lies on a curve which is mapped isomorphically to $e_{p_1''}$ by $\wtPsi$, so $\wtPsi(s_1)$ is a point on $e_{p_1''}$. Therefore, $\wtPsi\circ\wtPhi$ is defined at $(\wtPsi\circ\wtPhi)(e_{p_1''})$ and fixes this point.

We next consider the curves $(\wtPhi^{-1})(e_{p'}),(\wtPhi^{-1})(e_{p_1'}),(\wtPhi^{-1})(e_{p_2'}),(\wtPhi^{-1})(e_{p_3'}),(\wtPhi^{-1})(e_{p_4'})$.

For $(\wtPhi^{-1})(e_{p'})$, we note that $\wtPhi$ is defined everywhere on $(\wtPhi^{-1})(e_{p'})$ and maps it isomorphically to $e_{p'}$. Since $\Psi(p')=\Psi([0:1:1])=[0:2:1]$, the map $\wtPsi$ is defined everywhere on $e_{p'}$ and contracts $e_{p'}$ to $p_{+2}$, the point corresponding to $[0:2:1]$ on $\Bl_{R'}\Bl_RE_q$. Moreover, the point $p_{+2}$ lies on $(\wtPhi^{-1})(e_{p'})$. Therefore, $\wtPsi\circ\wtPhi$ contracts $(\wtPhi^{-1})(e_{p'})$ and fixes its image point $p_{+2}$.

For each $i$, the the map $\wtPhi$ is defined everywhere on $(\wtPhi^{-1})(e_{p_i'})$ and maps it isomorphically to $e_{p_i'}$. It suffices to look at $e_{p_1'}$. Similarly as in the proof of Proposition \ref{prop:Mgf=MgMf} we have $p_1'=([1:1:0],[-2:1])$, and the restriction $\Psi_1\vert_{e_{p_1}}$ is given by
\begin{equation*}
	([1:1:0],[s:t])\mapsto([1:1:0],[2s:2t+s]).
\end{equation*}
So the map $\wtPsi$ is defined everywhere on $e_{p_1'}$ and contracts $e_{p_1'}$ to $s_1$. Moreover, the point $s_1$ lies on $(\wtPhi^{-1})(e_{p_1'})$. Therefore $\wtPsi\circ\wtPhi$ contracts $(\wtPhi^{-1})(e_{p_1'})$ and fixes its image point $s_1$.
\end{proof}

\subsection{\texorpdfstring{Dynamical degree of $\Psi\circ\Phi$}{}}

The blow-up $\Bl_{R'}\Bl_RE_q$ is a birational model on which $\wtPsi\circ\wtPhi$ is algebraically stable. In order to see the action of $\wtPsi\circ\wtPhi$ on $N^1(\Bl_{R'}\Bl_RE_q)$, we compute the matrices that represent the action of $\wtPhi$ and $\wtPsi$ and then take their product.

\begin{prop}
The linear map $\wtPhi_*:N^1(\Bl_{R'}\Bl_RE_q)\to N^1(\Bl_{R'}\Bl_RE_q)$ is given by
\begin{equation*}
M_\Phi:=
\scriptsize{
\begin{bmatrix}
5 & 4 & 0 & 0 & 0 & 0 & 0 & 1 & 1 & 1 & 1 & 0 & 0 & 0 & 0 & 0 \\
0 & 0 & 0 & 0 & 0 & 0 & 1 & 0 & 0 & 0 & 0 & 0 & 0 & 0 & 0 & 0 \\
-1 & -1 & 1 & 0 & 0 & 0 & 0 & -1 & 0 & 0 & 0 & 0 & 0 & 0 & 0 & 0 \\
-1 & -1 & 0 & 1 & 0 & 0 & 0 & 0 & -1 & 0 & 0 & 0 & 0 & 0 & 0 & 0 \\
-1 & -1 & 0 & 0 & 1 & 0 & 0 & 0 & 0 & -1 & 0 & 0 & 0 & 0 & 0 & 0 \\
-1 & -1 & 0 & 0 & 0 & 1 & 0 & 0 & 0 & 0 & -1 & 0 & 0 & 0 & 0 & 0 \\
-4 & -3 & 0 & 0 & 0 & 0 & 0 & -1 & -1 & -1 & -1 & 0 & 0 & 0 & 0 & 0 \\
-2 & -2 & 0 & 0 & 0 & 0 & 0 & -1 & 0 & 0 & 0 & 0 & 0 & 0 & 0 & 0 \\
-2 & -2 & 0 & 0 & 0 & 0 & 0 & 0 & -1 & 0 & 0 & 0 & 0 & 0 & 0 & 0 \\
-2 & -2 & 0 & 0 & 0 & 0 & 0 & 0 & 0 & -1 & 0 & 0 & 0 & 0 & 0 & 0 \\
-2 & -2 & 0 & 0 & 0 & 0 & 0 & 0 & 0 & 0 & -1 & 0 & 0 & 0 & 0 & 0 \\
0 & 0 & 0 & 0 & 0 & 0 & 0 & 0 & 0 & 0 & 0 & 0 & 0 & 0 & 0 & 0 \\
-1 & -1 & 1 & 0 & 0 & 0 & 0 & -1 & 0 & 0 & 0 & 0 & 0 & 0 & 0 & 0 \\
-1 & -1 & 0 & 1 & 0 & 0 & 0 & 0 & -1 & 0 & 0 & 0 & 0 & 0 & 0 & 0 \\
-1 & -1 & 0 & 0 & 1 & 0 & 0 & 0 & 0 & -1 & 0 & 0 & 0 & 0 & 0 & 0 \\
-1 & -1 & 0 & 0 & 0 & 1 & 0 & 0 & 0 & 0 & -1 & 0 & 0 & 0 & 0 & 0
\end{bmatrix}
},
\end{equation*}
and the linear map $\wtPsi_*:N^1(\Bl_{R'}\Bl_RE_q)\to N^1(\Bl_{R'}\Bl_RE_q)$ is given by
\begin{equation*}
M_\Psi:=
\scriptsize{
\begin{bmatrix}
5 & 4 & 0 & 0 & 0 & 0 & 0 & 0 & 0 & 0 & 0 & 0 & 1 & 1 & 1 & 1 \\
0 & 0 & 0 & 0 & 0 & 0 & 0 & 0 & 0 & 0 & 0 & 1 & 0 & 0 & 0 & 0 \\
-1 & -1 & 1 & 0 & 0 & 0 & 0 & 0 & 0 & 0 & 0 & 0 & -1 & 0 & 0 & 0 \\
-1 & -1 & 0 & 1 & 0 & 0 & 0 & 0 & 0 & 0 & 0 & 0 & 0 & -1 & 0 & 0 \\
-1 & -1 & 0 & 0 & 1 & 0 & 0 & 0 & 0 & 0 & 0 & 0 & 0 & 0 & -1 & 0 \\
-1 & -1 & 0 & 0 & 0 & 1 & 0 & 0 & 0 & 0 & 0 & 0 & 0 & 0 & 0 & -1 \\
0 & 0 & 0 & 0 & 0 & 0 & 0 & 0 & 0 & 0 & 0 & 0 & 0 & 0 & 0 & 0 \\
-1 & -1 & 1 & 0 & 0 & 0 & 0 & 0 & 0 & 0 & 0 & 0 & -1 & 0 & 0 & 0 \\
-1 & -1 & 0 & 1 & 0 & 0 & 0 & 0 & 0 & 0 & 0 & 0 & 0 & -1 & 0 & 0 \\
-1 & -1 & 0 & 0 & 1 & 0 & 0 & 0 & 0 & 0 & 0 & 0 & 0 & 0 & -1 & 0 \\
-1 & -1 & 0 & 0 & 0 & 1 & 0 & 0 & 0 & 0 & 0 & 0 & 0 & 0 & 0 & -1 \\
-4 & -3 & 0 & 0 & 0 & 0 & 0 & 0 & 0 & 0 & 0 & 0 & -1 & -1 & -1 & -1 \\
-2 & -2 & 0 & 0 & 0 & 0 & 0 & 0 & 0 & 0 & 0 & 0 & -1 & 0 & 0 & 0 \\
-2 & -2 & 0 & 0 & 0 & 0 & 0 & 0 & 0 & 0 & 0 & 0 & 0 & -1 & 0 & 0 \\
-2 & -2 & 0 & 0 & 0 & 0 & 0 & 0 & 0 & 0 & 0 & 0 & 0 & 0 & -1 & 0 \\
-2 & -2 & 0 & 0 & 0 & 0 & 0 & 0 & 0 & 0 & 0 & 0 & 0 & 0 & 0 & -1
\end{bmatrix}
}.
\end{equation*}
\end{prop}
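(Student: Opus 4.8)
The plan is to read off both matrices by leveraging the action of $\tPhi$ on the smaller, $\Phi$-adapted model already recorded in Proposition \ref{prop:actionofPhi}, rather than recomputing image curves from the explicit quintic formula. Write $P=\{p,p_1,p_2,p_3,p_4,p'\}$, $P'=\{p_1',\dots,p_4'\}$ for the centers adapted to $\Phi$, and $Q=\{p,p_1,\dots,p_4,p''\}$, $Q'=\{p_1'',\dots,p_4''\}$ for those adapted to $\Psi$. Then $R=P\cup\{p''\}$ and $R'=P'\cup\{p_1'',\dots,p_4''\}$, so the common refinement $\Bl_{R'}\Bl_RE_q$ dominates $\Bl_{P'}\Bl_PE_q$ via a blow-down $\beta$ contracting $e_{p''},e_{p_1''},\dots,e_{p_4''}$, and symmetrically dominates the $\Psi$-model $\Bl_{Q'}\Bl_QE_q$ by contracting $e_{p'},e_{p_1'},\dots,e_{p_4'}$. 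The two maps carry complementary ``extra'' divisors: $\wtPhi$ sees $e_{p''},e_{p_i''}$ as superfluous and contracts them, while $\wtPsi$ contracts $e_{p'},e_{p_i'}$. This is the structural skeleton of the two matrices.

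First I would fill in the columns for the contracted divisors of $M_\Phi$. The local computations already carried out in Proposition \ref{prop:Mgf=MgMf} and in the algebraic-stability argument show that $\wtPhi$ sends each of $e_{p''},e_{p_1''},\dots,e_{p_4''}$ to a point, so their pushforward classes vanish and the five corresponding columns are zero. For the remaining basis classes $h,e_p,e_{p'},e_{p_i},e_{p_i'}$, I would transport the values of $\tPhi_*$ from Proposition \ref{prop:actionofPhi} across $\beta$. The only new bookkeeping is the coefficient along the five new exceptional divisors: since $\beta^*e_{p_i}=e_{p_i}+e_{p_i''}$, any class carrying a term $-e_{p_i}$ on the $\Phi$-model acquires a total-transform term $-e_{p_i''}$ on the refinement, and the content of the computation is to confirm that no further multiplicity correction occurs there.

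The crux, and the main obstacle, is exactly this multiplicity check at the new infinitely near centers. Concretely, $\wtPhi_*(h)$ is represented by the strict transform of the quintic image of a general line, which meets $E_q\cong\bp^2$ at $p_1,\dots,p_4$ with tangent directions $l_1,\dots,l_4$. The point $p_i'$ records the $l_i$-direction, whereas $p_i''$ records the \emph{distinct} $\Psi$-tangent direction at $p_i$; e.g. at $p_1=[1,1,0]$ one compares $l_1:2x-2y+z=0$ for $\Phi$ against $2x-2y-z=0$ for $\Psi$, which are different lines through $p_1$. Because the two directions differ, the quintic passes through $p_i'$ but is transverse to $p_i''$, so its class in $\Bl_{R'}\Bl_RE_q$ differs from its $\Phi$-model class only by the total-transform term $-e_{p_i''}$, with coefficient exactly $-1$ and no correction; a parallel check that $\Phi^{-1}$ is regular at $p''=[0,-1,1]$ (so a generic quintic image avoids $p''$) gives coefficient $0$ along $e_{p''}$. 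The same analysis applied to the contracted quartic (the image of $e_p$), to $e_{p'}$ (where $\Phi(p')=p$ gives $\wtPhi_*(e_{p'})=e_p$), and to the curves $e_{p_i}$ — using the local model $\Phi_1\vert_{e_{p_1}}:([1,1,0],[s,t])\mapsto([1,1,0],[2s,2t-s])$ from the stability proof to get $\wtPhi_*(e_{p_i})=e_{p_i}+e_{p_i''}$ — fills in the remaining entries of $M_\Phi$.

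Finally, $M_\Psi$ would follow by the identical procedure with the roles of the $\Phi$-data $(p',p_i')$ and the $\Psi$-data $(p'',p_i'')$ interchanged: here $\wtPsi$ contracts $e_{p'},e_{p_i'}$, and the analogue of Proposition \ref{prop:actionofPhi} for $\tPsi$ (noted earlier to be given by the same matrix in suitable coordinates) supplies the non-contracted columns. I expect the verification of the tangent-direction incidences at the $p_i$ to be the only genuinely delicate point; everything else is routine total-transform bookkeeping across the blow-down $\beta$.
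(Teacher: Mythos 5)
Your proposal is correct and follows essentially the same route as the paper: set the columns of the contracted divisors $e_{p''},e_{p_i''}$ (resp.\ $e_{p'},e_{p_i'}$) to zero, transport the formulas of Proposition \ref{prop:actionofPhi} to $\Bl_{R'}\Bl_RE_q$ with the total-transform correction $-\sum e_{p_i''}$ justified by the fact that the image curves have tangent $l_i$ at $p_i$ and so miss the $\Psi$-direction points $p_i''$, recover $\wtPhi_*(e_{p_i})=e_{p_i}+e_{p_i''}$ from the invariance of the total transform $e_{p_i}+e_{p_i''}$, and obtain $M_\Psi$ by the symmetric argument. The only cosmetic difference is that the paper derives $\wtPhi_*(e_{p_i})$ by linearity from $\wtPhi_*(e_{p_i}+e_{p_i''})=e_{p_i}+e_{p_i''}$ and $\wtPhi_*(e_{p_i''})=0$, whereas you invoke the local model on $e_{p_1}$; both rest on the same computations already in the paper.
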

\begin{proof}
We will show the action of $\wtPhi$ is given by $M_\Phi$, and the proof for $\wtPsi$ is similar. 

First, we note that
\begin{equation*}
	\wtPhi_*(e_{p''})=\wtPhi_*(e_{p_i''})=0
\end{equation*}
for $i=1,\dots,4$.

Continuing the arguments in the proof of Proposition \ref{prop:actionofPhi}, we have
\begin{align*}
	\wtPhi_*(h)&=5h-4e_{p'}-\sum e_{p_i}-2\sum e_{p_i'}-\sum e_{p_i''}\\
	\wtPhi_*(e_p)&=4h-3e_{p'}-\sum e_{p_i}-2\sum e_{p_i'}-\sum e_{p_i''}\\
	\wtPhi_*(e_{p'})&=e_p\\
	\wtPhi_*(e_{p_i}+e_{p_i''})&=e_{p_i}+e_{p_i''}\\
	\wtPhi_*(e_{p_i'})&=h-e_{p'}-e_{p_i}-e_{p_i'}-e_{p_i''}.
\end{align*}
Also
\begin{equation*}
	\wtPhi_*(e_{p_i})=e_{p_i}+e_{p_i''}-\wtPhi_*(e_{p_i''})=e_{p_i}+e_{p_i''}.
\end{equation*}

\end{proof}

Proposition \ref{prop:Mgf=MgMf} suggests that we can use the product of $M_\Psi$ and $M_\Phi$ to represent the action of $\wtPsi\circ\wtPhi$ on $N^1(\Bl_{R'}\Bl_RE_q)$. The spectral radius of $M_\Psi M_\Phi$ is $16$. By Theorem \ref{thm:radius}, we obtain the following result:
\begin{prop}
The map $\Psi\circ\Phi$ on the exceptional divisor $E_q$ induced by $\chi\circ\varphi\circ\psi\circ\varphi$ has $\lambda(\Psi\circ\Phi)=16$.
\end{prop}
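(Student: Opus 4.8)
The plan is to convert the statement into a single eigenvalue computation. Since the dynamical degree is a birational invariant, $\lambda(\Psi\circ\Phi)=\lambda(\wtPsi\circ\wtPhi)$, where the latter map lives on the model $\Bl_{R'}\Bl_R E_q$ on which it is algebraically stable by the previous proposition. Because no curve contracted by $\wtPhi$ is sent into $\Ind(\wtPsi)$, Proposition \ref{prop:Mgf=MgMf} gives $(\wtPsi\circ\wtPhi)_*=\wtPsi_*\circ\wtPhi_*$, so the action on $N^1(\Bl_{R'}\Bl_R E_q)$ is represented by the product $M_\Psi M_\Phi$. Theorem \ref{thm:radius} then equates $\lambda(\wtPsi\circ\wtPhi)$ with the spectral radius of $M_\Psi M_\Phi$, and the whole problem reduces to showing this number is $16$.

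To make the $16\times 16$ eigenvalue problem tractable I would exploit the $S_4$-symmetry permuting the four indices $1,2,3,4$ that label the tangent points and their infinitely near points. The formulas of Proposition \ref{prop:actionofPhi} and of the preceding proposition show that $\wtPhi_*$ and $\wtPsi_*$ act identically on $e_{p_i}$, on $e_{p_i'}$, and on $e_{p_i''}$ across $i$, so $M_\Phi$, $M_\Psi$, and hence $M_\Psi M_\Phi$ commute with this permutation action and split along its isotypic components. The trivial component is the $7$-dimensional span of $h, e_p, e_{p'}, e_{p''}$ together with the three symmetric sums $\sum_i e_{p_i}$, $\sum_i e_{p_i'}$, $\sum_i e_{p_i''}$; its complement is three copies of the $3$-dimensional standard representation, on which the action is governed by a single $3\times 3$ matrix. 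On the standard part the reduced maps are $(a,b,c)\mapsto(a-b,-b,a-b)$ for $\wtPhi_*$ and $(a,b,c)\mapsto(a-c,a-c,-c)$ for $\wtPsi_*$; composing them gives $(a,b,c)\mapsto(0,0,b-a)$, which is nilpotent. Hence the standard part contributes only zero eigenvalues, and the spectral radius of $M_\Psi M_\Phi$ equals that of its $7\times 7$ trivial block.

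The remaining step, and the only genuinely computational one, would be to form the $7\times 7$ product of the trivial blocks of $M_\Psi$ and $M_\Phi$, compute its characteristic polynomial, and check that its root of largest modulus is exactly $16$ (equivalently, that the degree-$16$ characteristic polynomial of $M_\Psi M_\Phi$ factors as $\lambda^9$ times this degree-$7$ factor). This is the main obstacle, but the symmetry reduction brings it down to a by-hand determinant of manageable size. Combined with the previous paragraph, this yields $\lambda(\Psi\circ\Phi)=16$ through Theorem \ref{thm:radius}.

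Finally, to see that $\Psi\circ\Phi$ is not regularizable, I would argue by contradiction: if it were conjugate to an automorphism of a smooth projective surface, then its action on the $N^1$ of that model would be an isometry of a lattice carrying an intersection form of signature $(1,n)$, and the leading eigenvalue of such an isometry is either $1$ or a Salem number. Salem numbers are irrational algebraic integers, whereas our dynamical degree is the rational integer $16>1$; hence $16$ is not a Salem number and no such model can exist. Therefore $\Psi\circ\Phi$ is not regularizable.
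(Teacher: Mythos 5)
Your proposal is correct, and its backbone is the same as the paper's: pass to the algebraically stable model $\Bl_{R'}\Bl_R E_q$, use Proposition \ref{prop:Mgf=MgMf} to write $(\wtPsi\circ\wtPhi)_*=\wtPsi_*\circ\wtPhi_*=M_\Psi M_\Phi$, and invoke Theorem \ref{thm:radius} together with birational invariance of $\lambda$. Where you genuinely differ is in the two steps the paper leaves as bare assertions. For the eigenvalue computation, the paper simply states that the spectral radius of $M_\Psi M_\Phi$ is $16$ (the computation being delegated to the attached SageMath code), whereas your $S_4$-equivariant splitting turns it into a hand calculation; I checked that it goes through as you say: the reduced matrices on the isotypic piece $\mathrm{std}^{\oplus 3}$ are exactly $(a,b,c)\mapsto(a-b,-b,a-b)$ and $(a,b,c)\mapsto(a-c,a-c,-c)$, their composite $(a,b,c)\mapsto(0,0,b-a)$ has square zero, and the $7\times 7$ trivial block of $M_\Psi M_\Phi$ has nonzero eigenvalues exactly $16$ and $1$, so the full characteristic polynomial is $\lambda^{14}(\lambda-1)(\lambda-16)$ and the spectral radius is $16$. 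Be aware that as written your argument is a reduction, not yet a complete proof: you still owe the evaluation of that $7\times 7$ (in fact, after deleting kernel rows and columns, $3\times 3$) characteristic polynomial, though this is routine and yields the stated answer. For non-regularizability, the paper offers no justification at all; your argument --- an automorphism of a smooth projective surface acts on $N^1$ as an isometry of a lattice of signature $(1,n)$, so its dynamical degree is $1$ or a Salem number, and the rational integer $16>1$ is neither --- is the standard one and is correct. In short, your route buys a self-contained, computer-free verification of both claims at the cost of some representation theory, while the paper's route buys brevity by outsourcing the eigenvalue computation and leaving the Salem-number obstruction implicit.
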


Thus, the conjugate map $E_\fq\dashrightarrow E_\fq$ induced by $\vartheta=\varphi\circ\chi\circ\varphi\circ\psi$ has $\lambda=16>1$. This proves the Theorem \ref{mainthm}, and we again found the interesting dynamics on the exceptional divisor $E_\fq$ in the fiber over $q$.

We also make some remarks about the regularizability of the map $\Psi\circ\Phi$. A birational map $f\in\Bir(\bp^2)$ is \textit{\textbf{regularizable}} if there exists a projective surface $S$ and a birational map $\pi:S\dashrightarrow\bp^2$ such that the lift $\pi^{-1}\circ f\circ\pi$ is an automorphism of $S$. It turns out that, for an automorphism of a projective surface, the dynamical degree, if different from $1$, is either a degree 2 algebraic integer or a Salem number (See \cite{DF2001,BC2016}). Here a \textit{\textbf{Salem number}} means an algebraic integer greater than $1$ whose other conjugates have absolute value $\leq 1$, with at least one having absolute value $1$. Since $16$ is neither a degree 2 algebraic integer nor a Salem number, the map $\Psi\circ\Phi$ cannot be birationally conjugate to an automorphism of a projective surface. Hence $\Psi\circ\Phi$ is not regularizable.

In a separate SageMath script, we compute an explicit equation for the map $\Psi\circ\Phi$. We show that the dynamical degree $\lambda=16$ given here is consistent with the direct approximations of the $\lambda$ given by assuming the Kawaguchi-Silverman conjecture and computing the height growth along a few orbits. Here we present a few terms in the approximation sequence along an orbit.
\begin{verbatim}
sage: P2.<x,y,z> = ProjectiveSpace(QQ,2)
sage: Phi = Hom(P2,P2)([
....: 9*x^5 - 10*x^3*y^2 + x*y^4 + 10*x^3*y*z - 2*x*y^3*z,
....: 9*x^4*y - 10*x^2*y^3 + y^5 + 9*x^4*z - y^4*z,
....: 9*x^4*z - 10*x^2*y^2*z + y^4*z])
sage: Psi = Hom(P2,P2)([
....: 9*x^5 - 10*x^3*y^2 + x*y^4 - 10*x^3*y*z + 2*x*y^3*z,
....: 9*x^4*y - 10*x^2*y^3 + y^5 - 9*x^4*z + y^4*z,
....: 9*x^4*z - 10*x^2*y^2*z + y^4*z])
sage: f = Psi*Phi
sage: f.normalize_coordinates()
sage: p = P2([4,5,1])
sage: l = []
sage: l.append(p)
sage: for i in range(1,6):
....:     l.append(f(p))
....:     p = f(p)
sage: l[-3].global_height()/l[-4].global_height()
15.9762820559196
sage: l[-2].global_height()/l[-3].global_height()
15.9989499436956
sage: l[-1].global_height()/l[-2].global_height()
15.9999881533067
\end{verbatim}

\bibliographystyle{amsplain}
\bibliography{Z_bibliography.bib}

@article{BZ2018,
	author={Bekker,Boris M. and Zarhin,Yuri G.},
	year={2018},
	title={Division by 2 of rational points on elliptic curves},
	journal={St. Petersburg mathematical journal},
	volume={29},
	number={4},
	pages={683-713},
	keywords={2-descent; Mathematics; Mordell-Weil theorem; Physical Sciences; Science & Technology; Torsion subgroup},
	isbn={1061-0022},
	language={English},
}

@article{Blanc2008,
	author = {J{\'e}r{\'e}my Blanc},
	title = {{On the inertia group of elliptic curves in the Cremona group of the plane}},
	volume = {56},
	journal = {Michigan Mathematical Journal},
	number = {2},
	publisher = {University of Michigan, Department of Mathematics},
	pages = {315 -- 330},
	year = {2008},
	doi = {10.1307/mmj/1224783516},
	URL = {https://doi.org/10.1307/mmj/1224783516}
}

@article{Blanc2013,
	ISSN = {00222518, 19435258},
	URL = {http://www.jstor.org/stable/24904175},
	author = {Jérémy Blanc},
	journal = {Indiana University Mathematics Journal},
	number = {4},
	pages = {1143--1164},
	publisher = {Indiana University Mathematics Department},
	title = {Dynamical Degrees of (Pseudo)-Automorphisms Fixing Cubic Hypersurfaces},
	urldate = {2024-04-08},
	volume = {62},
	year = {2013}
}

@article{BC2016,
	ISSN = {08940347, 10886834},
	URL = {https://www.jstor.org/stable/jamermathsoci.29.2.415},
	author = {Jérémy Blanc and Serge Cantat},
	journal = {Journal of the American Mathematical Society},
	number = {2},
	pages = {415--471},
	publisher = {American Mathematical Society},
	title = {Dynamical degrees of birational transformations of projective surfaces},
	urldate = {2025-11-18},
	volume = {29},
	year = {2016}
}

@mastersthesis{Bot2021,
	author = {Bot, Anna},
	title = {On automorphisms of rational surfaces},
	type = {Master's thesis},
	school = {ETH Zürich},
	year = {2021},
	address = {Zürich, Switzerland},
}

@article{Dang2020,
	author = {Dang, Nguyen-Bac},
	title = {Degrees of iterates of rational maps on normal projective varieties},
	journal = {Proceedings of the London Mathematical Society},
	volume = {121},
	number = {5},
	pages = {1268-1310},
	keywords = {14E05, 37A35, 14C25 (secondary)},
	doi = {https://doi.org/10.1112/plms.12366},
	url = {https://londmathsoc.onlinelibrary.wiley.com/doi/abs/10.1112/plms.12366},
	year = {2020}
}

@article{DF2001,
  title={Dynamics of bimeromorphic maps of surfaces},
  author={Jeffery Diller and Charles Favre},
  journal={American Journal of Mathematics},
  year={2001},
  volume={123},
  pages={1135-1169},
  url={https://api.semanticscholar.org/CorpusID:51771582}
}

@Book{Gizatullin1994,
	author={Gizatullin, Marat Kh.},
	editor={Tikhomirov, Alexander and Tyurin, Andrej},
	title={The Decomposition, Inertia and Ramification Groups in Birational Geometry},
	bookTitle={Algebraic Geometry and its Applications: Proceedings of the 8th Algebraic Geometry Conference, Yaroslavl' 1992. A Publication from the Steklov Institute of Mathematics. Adviser: Armen Sergeev},
	year={1994},
	publisher={Vieweg \& Teubner Verlag},
	address={Wiesbaden},
	pages={39-45},
	isbn={978-3-322-99342-7},
	doi={10.1007/978-3-322-99342-7_5},
	url={https://doi.org/10.1007/978-3-322-99342-7_5}
}

@misc{Kollár2008,
	title={Exercises in the birational geometry of algebraic varieties}, 
	author={János Kollár},
	year={2008},
	eprint={0809.2579},
	archivePrefix={arXiv},
	primaryClass={math.AG},
	url={https://arxiv.org/abs/0809.2579}, 
}

@article{Lesieutre2016,
	Author = {Lesieutre, John},
	Title = {A pathology of asymptotic multiplicity in the relative setting},
	Journal = {Mathematical Research Letters},
	Year = {2016},
	Volume = {23},
	Number = {5},
	Pages = {1433-1451},
	Month = {9},
	DOI = {10.4310/MRL.2016.v23.n5.a9},
	ISSN = {1073-2780},
	EISSN = {1945-001X},
	ORCID-Numbers = {Lesieutre, John/0000-0002-5886-1087},
	Unique-ID = {WOS:000393010600009},
}

@article{Reid1983,
author = {Reid, Miles},
title = {Minimal models of canonical 3-folds},
year = {1983},
language = {English},
journal = {Advanced Studies in Pure Mathematics},
volume = {1},
doi = {10.2969/aspm/00110131},
pages = {131-180},
}

@article{Truong2020,
	url = {https://doi.org/10.1515/crelle-2017-0052},
	title = {Relative dynamical degrees of correspondences over a field of arbitrary characteristic},
	author = {Tuyen Trung Truong},
	pages = {139-182},
	volume = {2020},
	number = {758},
	journal = {Journal für die reine und angewandte Mathematik (Crelles Journal)},
	doi = {doi:10.1515/crelle-2017-0052},
	year = {2020},
	lastchecked = {2024-04-14}
}

@article{Xie2015,
	author = {Xie, Junyi},
	title = {{Periodic points of birational transformations on projective surfaces}},
	volume = {164},
	journal = {Duke Mathematical Journal},
	number = {5},
	publisher = {Duke University Press},
	pages = {903-932},
	keywords = {birational transformation, dynamical degree, periodic point, projective surface, Zariski dense},
	year = {2015},
	doi = {10.1215/00127094-2877402},
	URL = {https://doi.org/10.1215/00127094-2877402}
}

\section*{Acknowledgement}

I extend my sincere gratitude to my advisor, John Lesieutre, for proposing this problem and for his invaluable discussions and guidance throughout this work. I thank the anonymous referees for their constructive feedback that enhanced the manuscript. This work was partially supported by NSF-DMS grant 2142966.

\end{document}